\newcommand{\field}[1]{\mathbb{#1}}
\newcommand{\R}{\field{R}}
\renewcommand{\H}{\field{H}}
\newcommand{\N}{\field{N}}
\newcommand{\la}{\label}
\newcommand{\pref}[1]{(\ref{#1})}
\newcommand{\ba}{\begin{array}}
\newcommand{\ea}{\end{array}}
\newcommand{\vep}{\varepsilon}
\newcommand{\be}{\begin{equation}}
\newcommand{\ee}{\end{equation}}
\newcommand{\bea}{\begin{eqnarray}}
\newcommand{\eea}{\end{eqnarray}}
\newcommand{\nn}{\nonumber}
\newcommand{\dist}{{\mathrm{dist}}}
\newcommand{\supp}{{\mathrm{supp}}}
\newcommand{\loc}{\mathrm{loc}}
\newcommand{\intmean}{{\int\hspace{-11.6pt}-}}
\newcommand{\sintmean}{{\int\hspace{-9pt}-}}
\newcounter{const}\setcounter{const}{0}
\newtheorem{thm}{Theorem}[section]
\newtheorem{defi}[thm]{Definition}
\newtheorem{prop}[thm]{Proposition}
\newtheorem{cor}[thm]{Corollary}
\newtheorem{rem}[thm]{Remark}
\renewcommand{\H}{{\cal H}}
\newcommand{\G}{{\cal G}}
\newcommand{\F}{{\cal F}}
\renewcommand{\epsilon}{\varepsilon}
 \def\ocirc#1{\ifmmode\setbox0=\hbox{$#1$}\dimen0=\ht0
    \advance\dimen0 by1pt\rlap{\hbox to\wd0{\hss\raise\dimen0
    \hbox{\hskip.2em$\scriptscriptstyle\circ$}\hss}}#1\else
    {\accent"17 #1}\fi}
\def\cprime{$'$}
\def\cb{\color{black}}
\def\cred{\color{red}}
\def\cbl {\color{blue}}
\begin{document}

\title{
Regularity for minimizers for functionals of double phase with variable exponents
}
\author{
Maria Alessandra Ragusa
\thanks{Dipartimento di Matematica e Informatica,
Viale Andrea Doria, 6-95125 Catania, Italy,
 RUDN University”, 6 Miklukho - Maklay St, Moscow, 117198, Russia
e-mail:maragusa@dmi.unict.it}
~\&~ Atsushi Tachikawa
\thanks{Department of Mathematics, Faculty of Science and Technology,
Tokyo University of Science, Noda, Chiba, 278-8510, Japan,
e-mail:tachikawa$\_$atsushi@ma.noda.tus.ac.jp}
\thanks{
 The first author is partially supported by PRIN 2017 and the Ministry of Education and Science of the Russian Federation (5-100 program of the Russian Ministry of Education).
The second author is partially supported by Japan Society for the Promotion of Science
 KAKENHI Grant Number 17K05337.}
}
\date{\empty}
\maketitle
\begin{abstract}
The functionals of double phase type
\[
	 \H (u):= \int \left(|Du|^{p} + a(x)|Du|^{q} \right) dx, ~~
	 ~~~(q>p>1,~~a(x)\geq 0)
\]
are introduced in the
epoch-making paper by Colombo-Mingione \cite{colmin15-1}
for constants $p$ and $q$, and investigated by them and Baroni.
They obtained sharp regularity results for minimizers of such functionals.
In this paper we treat the case that the exponents are functions of $x$
 and partly generalize their regularity results.

\end{abstract}

\section{Introduction and main theorem}

The main goal of this  paper is to provide  a regularity theorem  for minimizers of a class of integral functionals of the  calculus of variations
called {\it of double phase type} with variable exponents defined for
$u \in W^{1,1}(\Omega;\R^N)~~( \Omega\in \R^n,~n,N\geq 2)$ as
\[
	\F(u,\Omega) := \int_\Omega \left( |Du|^{p(x)} + a(x)|Du|^{q(x)} \right) dx, ~~~~
q(x)\geq p(x)>1,~ a(x)\geq 0,
\]
where $p(x), q(x)$ and $a(x)$ are assumed to be H\"older continuous.
 They do not only have strongly  non-uniform ellipticity but also
discontinuity of growth order at points where $a(x)=0$.
The above functional is provided by the following type of functionals with variable exponent growth
\[
	u \mapsto \int g(x,Du) dx,   ~~~ \lambda |z|^{p(x)} \leq g(x,z) \leq \Lambda (1+|z|)^{p(x)},
	~~~\Lambda\geq \lambda >0,
\]
which are called  {\it  of $p(x)$-growth. }
These $p(x)$-growth functionals have been introduced by Zhikov \cite{zhi86(rus)} (in this article $\alpha(x)$ is used as variable exponents)
in the setting of Homogenization theory.
He showed higher integrability for minimizers and, on the other hand, he gave an example of discontinuous
exponent $p(x)$ for which
the Lavrentiev phenomenon occurs (\cite{zhi95,zhi97}).

Such functionals provide a useful prototype for describing the behaviour  of strongly
inhomogeneous materials whose strengthening properties, connected to the exponent dominating the
growth of the gradient variable, significantly change with the point.
In \cite{zhi95}, Zhikov pointed out the relationship between $p(x)$-growth functionals and some
physical problems including thermistor.
As another application, the theory of electrorheological materials and fluids is known.
About these objects see, for example, \cite{ruz00(LN), acemin02,rajruz01,bogduzhabsch12}.

These kind of functionals have been the object of intensive investigation over the last years,
starting with the inspiring papers by Marcellini  \cite{mar89, mar91, mar96(SNS)}, where he introduced
so-called {\it $(p,q)$-} or {\it nonstandard} growth functionals:
\[
	u \mapsto \int f(x,u,Du) dx,~~
	\lambda |z|^p \leq f(x,u,z) \leq \Lambda (1+|z|)^q,
	~~~q \geq p \geq 1, ~~\Lambda\geq \lambda >0.
\]
About general  $(p,q)$-growth functionals, see for example \cite{BIFUCHS, BREIT, CHOE, espleomin04, sch08(advcal),
sch08(calvar),sch09,URUR, zhi95, zhi97}
 and the survey \cite{min06}.

For the continuous variable exponent case, nowadays many results on the regularity for minimizer are
known, see \cite{cosmin99,acemin01,acemin01(arc),ele04}.
Further results in this direction can be, for instance, found in
\cite{CRR}, \cite{EMM1}, \cite{EMM2}, \cite{EMM3},   \cite{ min07}, \cite{PRR}, \cite{PR1}, \cite{PR2}, \cite{PR3}, \cite{PR4}, \cite{PR5},
\cite{ragtactak,ragtac13,tac14,usu15,ragtac16,tacusu16} for partial regularity results for
$p(x)$-energy type functionals:
\[
	u \mapsto \int \left( A^{\alpha\beta}_{ij}(x,u)
	D_\alpha u^i(x) D_\beta u^j (x) \right)^{p(x)} dx, ~~~
	A^{\alpha\beta}_{ij}(x,u)z^i_\alpha z^j_\beta \geq \lambda |z|^2
\]

In 2015 a new class of functional so-called \it functionals of double phase \rm
are introduced by Colombo-Mingione \cite{colmin15-1}.
In the primary model they have in mind are
\[
	u \mapsto \H(u;\Omega):= \int H(x,Du) dx, ~~H(x,z):=|z|^p+a(x)|z|^q,
\]
where $p$ and $q$ are constants with $q\geq p>1$ and $a(\cdot)$ is a
H\"older continuous non-negative function.
By Colombo-Mingione \cite{colmin15-1,colmin15-2,colmin16} and
Baroni-Colombo-Mingione \cite{barcolmin16(Petersburg),barcolmin16,barcolmin18}
many sharp results are given about the regularity of local minimizers of the functional defined as
\be\la{def-G}
	u \mapsto \G(u;\Omega):= \int_\Omega G(x,u,Du) dx,
\ee
where $G(x,u,z) :\Omega \times \R \times \R^n \to R$ is a Carath\'eodory function
satisfying the following growth condition for some constants $\Lambda \geq \lambda >0$
besides several natural assumptions:
\[
	\lambda H(x,z) \leq G(x,u,z) \leq \Lambda H(x,z).
\]

For the scalar valued case,
in \cite{barcolmin18}  regularity results are given comprehensively.
Under the conditions
\be\la{cond-a1}
	a(\cdot) \in C^{0,\alpha}(\Omega), ~~\alpha \in (0,1]~~~\mathrm{and}~~~
	\frac{q}{p}\leq 1+ \frac{\alpha}{n},
\ee
or
\be\la{cond-b}
	u\in L^\infty(\Omega), ~~a(\cdot)\in C^{0,\alpha}(\Omega), ~~\alpha \in (0,1]~~
	~\mathrm{and}~~~\frac{q}{p}\leq 1+ \frac{\alpha}{p},
\ee
they showed that a local minimizer of $\G$ defined as \pref{def-G} is in the class $C^{1,\beta}$
 for some $\beta \in (0,1)$.

 For the scaler valued case, see also \cite{HHT}.
 They proved Harnack's inequality and the H\"olde continuity for quasiminimizer of the functional fo type
 \[
 	\int \varphi(x, |Du|) dx,
\]
where $\varphi$ is the so-called \it $\Phi$-function. \rm
We mention that Harnack's inequality is not valid in the vector valued cases
which we are considering in the present paper.

On the other hand, for vector valued case, in \cite{colmin15-1},
under the condition
\be\la{cond-c}
	a(\cdot) \in C^{0,\alpha}(\Omega), ~~\alpha \in (0,1]~~~\mathrm{and}~~~
	\frac{q}{p}< 1+ \frac{\alpha}{n},
\ee
$C^{1,\beta}$-regularity, for some $\beta \in (0,1)$, of local minimizers is given.

Zhikov has given in \cite{zhi95,zhi97} examples of functionals
with discontinuous growth order for which Lavrentiev phenomenon occurs.
So, in general settings,
we can not expect regularity of minimizers for such functionals which change
their growth order discontinuously. So, conditions
\pref{cond-a1}, \pref{cond-b} and \pref{cond-c}, which guarantee the regularity of minimizers,
are very significant.

~~

In this paper we deal with a typical type of functionals of double phase with variable exponents
and show a regularity result for minimizers.

In our opinion these results present new and interesting features from the point of view
of regularity theory.

Let $\Omega \subset \R^n$ be a bounded domain, $p(x), q(x)$ and $a(x)$
functions on $\Omega$ satisfying
\be\la{cond-p,q-1}
	p, q \in C^{0,\sigma}(\Omega), ~~~ q(x) \geq p(x) \geq p_0 >1,
~~~\mathrm{for all }~~~ x \in \Omega
\ee
where $p_0$ is a fixed constant strictly larger than one
and
\be\la{cond-a}
	a\in C^{0,\alpha}(\Omega), ~~~ a(x) \geq 0,
\ee
for $\alpha, ~\sigma \in (0,1]$.
Moreover, we assume that $p(x)$ and $q(x)$ satisfy
\be\la{cond-p,q-2} \sup_{x \in \Omega}~	
 \frac{q(x)}{p(x)} < 1+\frac{\beta}{n},~~~~~\beta = \min\{\alpha, \sigma\},
\ee
at every $x\in \Omega$  (compare these conditions with \pref{cond-a1}).
Let $F : \Omega \times \R^{nN} \to [0,\infty)$ be a function defined by
\be\la{def-F}
	F(x,z):= |z|^{p(x)} + a(x) |z|^{q(x)}.
\ee
We consider the functional with double phase and variable exponents defined
for $u:\Omega \to \R^N$ and $D\Subset \Omega$ as
\be\la{def-ful_F}
	\F(u,D) = \int_D F(x,Du) dx.
\ee

For a bounded open set $\Omega\subset \R^n$ and a function
$p : \Omega \to [1, +\infty)$, we define $L^{p(x)}(\Omega;\R^N)$
and $W^{1,p(x)}(\Omega;\R^N)$ as follows:
\begin{align*}
	L^{p(x)} (\Omega;\R^N)
	& := \{  u \in L^1(\Omega;\R^N)~;~ \int_\Omega |u|^{p(x)} dx < +\infty\}. \\
	W^{1,p(x)}(\Omega; \R^N) &:=
	\{ u \in L^{p(x)}\cap W^{1,1}(\Omega;\R^N)~;~ Du \in L^{p(x)}(\Omega;\R^{nN})\}.
\end{align*}
In what follows we omit the target space $\R^N$.
We also define $L^{p(x)}_{\mathrm{loc}}(\Omega)$ and
$W^{1,p(x)}_{\mathrm{loc}}(\Omega)$ similarly.
As mentioned in \cite{cosmuc02}, if $p(x)$ is uniformly continuous and
$\partial \Omega$ satisfies uniform cone property, then
\[
    W^{1, p(x)}(\Omega)=\{u \in W^{1,1}
    (\Omega)~ ;  Du \in L^{p(x)}(\Omega) \}.
\]
%

Let us define \it local minimizers of $\F$ \rm as follows:
\begin{defi}
A function $u \in  W^{1,1}
\Omega)$
is called to be a local minimizer
of $\F$ if $F(x,Du) \in L^1
(\Omega)$ and satisfies
\[
	\F(u;\supp \varphi) \leq \F(u+\varphi; \supp \varphi),
\]
for any $\varphi \in W^{1,p(x)}_{\loc}(\Omega)$ with compact support in  $\Omega$.
\end{defi}
\noindent
The main result of this paper is the following:
\begin{thm}\la{main}
Assume  that the conditions \pref{cond-p,q-1}, \pref{cond-a} and \pref{cond-p,q-2}
are fulfilled.
Let  $u\in W^{1, \cred1}(\Omega)$
 be a local minimizer of $\F$.
Then $u \in C^{1,\gamma}_{\loc} (\Omega)$ for some $\gamma \in (0,1)$.
\end{thm}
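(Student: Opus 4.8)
The plan is to follow the by-now classical two-step strategy for double-phase functionals — first establish higher integrability of $Du$ via a Caccioppoli-type inequality and a Gehring/reverse-Hölder argument exploiting the exponent bound \pref{cond-p,q-2}, then freeze the coefficients and exponents on small balls and compare the minimizer with the solution of the frozen problem to transfer its regularity back. Throughout one works with the $\Phi$-function $H(x,z)=|z|^{p(x)}+a(x)|z|^{q(x)}$ and its truncations, and uses the fact that $p(\cdot),q(\cdot),a(\cdot)$ are Hölder continuous with exponents controlled by $\beta=\min\{\alpha,\sigma\}$.

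The first main step is the \emph{higher integrability} lemma. On a ball $B_{2R}\Subset\Omega$ I would prove a Caccioppoli inequality of the form $\intmean_{B_R} H(x,Du)\,dx \le c\,\intmean_{B_{2R}} H\bigl(x,\tfrac{u-u_{B_{2R}}}{R}\bigr)\,dx + (\text{lower order})$, using the minimality of $u$ against $\varphi=\eta(u-u_{B_{2R}})$ and the structure conditions on $F$. The key point — and the place where \pref{cond-p,q-2} enters — is to bound the $q(x)$-term: on a small ball the oscillation of $a$ is $O(R^\alpha)$, so either $a$ is comparable to its infimum (non-degenerate phase, where $H$ behaves like a genuine $q$-growth integrand) or $a \lesssim R^\alpha$ and one absorbs $a(x)|Du|^{q(x)}$ into $|Du|^{p(x)}$ at the cost of a factor $R^\alpha \|Du\|^{q(x)-p(x)}$, which is controlled precisely because $q/p<1+\beta/n$ together with Sobolev–Poincaré. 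Iterating this with the Gehring lemma yields $Du\in L^{p(x)(1+\delta_0)}_{\loc}$ and, more usefully, $H(x,Du)\in L^{1+\delta_0}_{\loc}$ for some $\delta_0>0$; one also needs the "transfer of integrability" estimate showing $a(x)|Du|^{q(x)}$ gains a bit of extra integrability from the Hölder continuity of $a$.

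The second step is the \emph{comparison/freezing argument}. Fix a small ball $B_R$, set $p_0^- := \inf_{B_R}p$, $q_0^+:=\sup_{B_R}q$, $a_0:=\inf_{B_R}a$ (or a suitable frozen value), and compare $u$ with the minimizer $v$ of the frozen functional $w\mapsto \int_{B_R}(|Dw|^{p_0^-}+a_0|Dw|^{q_0^+})\,dx$ (or the $p$-Laplacian-type problem when $a_0=0$), $v=u$ on $\partial B_R$. For $v$ one has the standard $C^{1,\beta_0}$ estimates of Colombo–Mingione / Uhlenbeck-type in the vectorial case (this is legitimate since the frozen integrand has $p$-$q$ growth with constant exponents and $q_0^+/p_0^-\le 1+\beta/n + o(1)$, still within the scalar-range-free vectorial theory for functionals depending only on $|Du|$). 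The energy comparison $\int_{B_R}|Du-Dv|^{\gamma_1}\lesssim \omega(R)\,(\text{excess})$ — where $\omega$ collects the Hölder moduli of $p,q,a$ — uses minimality of $u$, minimality of $v$, the higher integrability from Step 1 to handle the difference of the two integrands (terms like $|z|^{p(x)}-|z|^{p_0^-}=|z|^{p(x)}(1-|z|^{p_0^--p(x)})$ are estimated by $R^\sigma\log|z|\cdot|z|^{p(x)}$, integrable after the $\epsilon$-gain), and convexity/monotonicity of the $p$-energy. Combining the excess decay for $v$ with this comparison gives a Campanato-type decay $\fint_{B_\rho}|Du-(Du)_{B_\rho}|^{p_0}\lesssim (\rho/R)^{n+2\tau}+\,R^{2\tau}$ for small $\tau$, hence $Du\in C^{0,\gamma}_{\loc}$ by the Campanato–Morrey characterization, i.e. $u\in C^{1,\gamma}_{\loc}(\Omega)$.

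The main obstacle is the second step: making the freezing of the \emph{exponents} $p(x),q(x)$ (not just the coefficient $a(x)$) compatible with the higher-integrability exponent, since the "error" in replacing $|z|^{p(x)}$ by $|z|^{p_0^-}$ carries a logarithmic factor in $|z|$ that must be reabsorbed using the $L^{1+\delta_0}$ gain and the relation between $\delta_0$, $\sigma$ and the gap $1+\beta/n - \sup(q/p)>0$; one has to run these estimates simultaneously and check the bookkeeping of exponents closes. A secondary technical point is that Step 1's Caccioppoli/Gehring scheme must itself be carried out carefully in the two alternative regimes ($a$ small vs.\ $a$ bounded below) uniformly in the ball, and that all constants depend only on the data $n,N,p_0,\|p\|_{C^{0,\sigma}},\|q\|_{C^{0,\sigma}},\|a\|_{C^{0,\alpha}}$ and not on the point.
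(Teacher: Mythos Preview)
Your overall two-step strategy (higher integrability via Caccioppoli/Gehring, then freezing-and-comparison) is exactly what the paper does, but there is one substantive difference in the freezing step that you should be aware of. You propose to freeze \emph{everything}, setting $a_0=\inf_{B_R}a$ (or $0$) and comparing with the minimizer of $\int(|Dw|^{p_0^-}+a_0|Dw|^{q_0^+})$; the paper instead freezes \emph{only the exponents} to their suprema $p_2,q_2$ and keeps the coefficient variable, setting $\tilde a(x)=a(x)^{q_2/q(x)}$ and comparing with the minimizer of $\F_0(w)=\int(|Dw|^{p_2}+\tilde a(x)|Dw|^{q_2})$. Since $\tilde a\in C^{0,\beta}$ and the exponents are now constant, $\F_0$ is a genuine Colombo--Mingione double-phase functional and their $C^{1,\beta}$ and Morrey/Campanato estimates apply to the comparison map $v$ as a black box; the only error terms come from replacing $p(x),q(x)$ by $p_2,q_2$, and these are of size $R^\sigma\!\int F_0(x,Du)^{1+\varepsilon}$ (the logarithmic loss you describe), absorbable via the higher integrability. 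Your choice with constant $a_0$ forces you to control terms like $\int_{B_R}(a(x)-a_0)|Du|^{q(x)}$, and in the degenerate regime $a_0=0$ this means bounding $\int|Du|^{q(x)}$ \emph{without} the weight $a(x)$---which is not available from $F(x,Du)\in L^{1+\delta_0}$ unless $\delta_0\ge \beta/n$, a quantitative Gehring gain you cannot assume. You would then have to re-run the full $p$-phase/$(p,q)$-phase alternative of \cite{colmin15-1} inside the comparison, which can be done but is considerably longer; the paper's device of leaving $a(x)$ unfrozen sidesteps this entirely.

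A second, smaller point: the paper carries out the comparison in two passes. First it obtains a Morrey decay $\int_{B_\rho}F_0(x,Du)\le C(\rho/R)^{n-\gamma}\int_{B_R}F_0(x,Du)+C\rho^{n-\gamma}$ (hence $u\in C^{0,\zeta}_{\loc}$ for every $\zeta<1$); this is needed because the raw comparison error is only $R^{\sigma-n\varepsilon}\int_{B_{2R}}F_0(x,Du)$, which is not $O(R^{n+\text{positive}})$ until one knows $\int_{B_{2R}}F_0(x,Du)\le CR^{n-\zeta}$. Only then does the second pass give the Campanato decay for $Du$. Your sketch jumps straight to the Campanato estimate, which would not close without this intermediate Morrey step.
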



\begin{rem}[About the symbols for H\"older spaces]
If we follow the standard textbooks,  Dacorogna \cite{dac15(book)}, Evans \cite{eva10(book)},
Gilberg-Trudinger \cite{giltru98}, etc.,  for $k\in \N, 0<\alpha\leq 1$,
$C^{k,\alpha}(\Omega) $ mean the subspaces of
$C^k(\Omega)$ consisting of functions
whose $k$-th order partial derivatives are locally H\"older continuous.
However, recently many authors (especially ones who study regularity
problems)
write them as $C^{k,\alpha}_\loc (\Omega) $,
and they use $C^{k,\alpha}(\Omega) $  for $C^{k,\alpha}(\bar\Omega)$
(namely, for uniformly H\"older continuous cases).
Anyway, with ``\rm loc" \em there is no doubt of misunderstanding.
So, in this paper we follow their usage for H\"older spaces.
%
\end{rem}
In order to prove the above theorem, we employ a freezing argument; namely
we consider a frozen functional which is given by freezing the exponents, and
compare a minimizer of the original functional under consideration with that of frozen one.

\section{Preliminary results}
\setcounter{equation}{0}
\setcounter{thm}{0}

In what follows, we use $C$ as generic constants, which may change from line to line,
but does not depend on the crucial quantities.
When we need to specify a constant, we use small letter $c$ with index.

For double phase functional with constant exponents, namely for
\be
	\H(u,D):=\int_D H(x, Du) dx, ~~ H(x,z)= |z|^p+a(x)|z|^q,
\ee
we prepare the following Sobolev-Poincar\'{e} inequality which is a slightly generalised
version of \cite[Theorem 1.6]{colmin15-1} due to Colombo-Mingione.
\begin{thm}\la{sobpoi1}
Let $a(x) \in C^{
0
,\beta}(\Omega)$ for some $\beta \in (0,1)$ and $1<p<q$ constants satisfying
\[
	\frac{q}{p} <  1+ \frac{\beta}{n},
\]
and let $\omega\in L^\infty(\R^n)$ with $\omega \geq 0$ and $\int_{B_R} \omega dx=1$
for $B_R\subset \Omega$ with $R\in (0,1)$.
Then, 
there exists a constant $C$ depending only on $n,~p,~q,~ [a]_{0,\beta},~R^n\|\omega\|_{L^\infty}$ and
$\|Dw\|_{L^p(B_R)}$ and exponents $d_1>1>d_2$ depending only on
$n,~p,~q,~\beta$ such that
\be\la{colmin(1.18)}
	\left( \intmean_{B_R} \left[ H\left(x,\frac{u-\langle u\rangle_\omega}{R}\right)\right]^{d_1} dx \right)^{\frac{1}{d_1}}
	\leq
	C\left( \intmean_{B_R} \left[ H\left(x,Du\right)\right]^{d_2} dx \right)^{\frac{1}{d_2}}
\ee
holds whenever $u \in W^{1,p}(B_R)$,
where
\[
	\langle u\rangle_\omega:=\int_{B_R} u(x)\omega(x) dx.
\]
Note that for the special choice $\omega=|B_R|^{-1}\chi_{B_R}$ we have
\[
	\langle u\rangle_\omega =\intmean_{B_R} u(x) dx.
\]
\end{thm}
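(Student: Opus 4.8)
The plan is to adapt the proof of \cite[Theorem~1.6]{colmin15-1}, the only genuinely new point being the general weight $\omega$ in place of the plain mean. First I would dispose of trivialities: if the right-hand side of \pref{colmin(1.18)} is infinite there is nothing to prove, so I assume all the integrals below are finite. Write $a_M:=\|a\|_{L^\infty(B_R)}$ and $v:=(u-\langle u\rangle_\omega)/R$. The weight enters only through the elementary bound $|\langle u\rangle_\omega-(u)_{B_R}|\le\|\omega\|_{L^\infty}|B_R|\intmean_{B_R}|u-(u)_{B_R}|\,dx$, which by Jensen's inequality gives, for every $m\ge1$, $\intmean_{B_R}|v|^{m}\,dx\le c(m,R^n\|\omega\|_{L^\infty})\intmean_{B_R}|(u-(u)_{B_R})/R|^{m}\,dx$; thus in every $L^m$-estimate of $v$ I may replace $\langle u\rangle_\omega$ by $(u)_{B_R}$ at the price of a constant depending only on $R^n\|\omega\|_{L^\infty}$, and then invoke the classical Sobolev--Poincar\'e inequality. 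The crucial preliminary is the choice of exponents: since $q/p<1+\beta/n\le1+1/n$ and $p>1$ one checks $q<p^*:=np/(n-p)$ (with $p^*=\infty$ if $p\ge n$), so I can fix $d_2:=d\in(0,1)$ so close to $1$ that $pd\ge1$ and the Sobolev conjugate $(pd)^*$ of $pd$ still satisfies $(pd)^*>q$, and then fix $d_1>1$ with $qd_1\le(pd)^*$; both depend only on $n,p,q$.

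Next I would carry out the Sobolev--Poincar\'e step at the \emph{reduced} exponent $pd$. Using it in $W^{1,pd}(B_R)$ together with $pd_1\le qd_1\le(pd)^*$ yields
\[
	\Big(\intmean_{B_R}|v|^{pd_1}\,dx\Big)^{1/d_1}\le c\Big(\intmean_{B_R}|Du|^{pd}\,dx\Big)^{1/d},\qquad
	\Big(\intmean_{B_R}|v|^{qd_1}\,dx\Big)^{1/d_1}\le c\Big(\intmean_{B_R}|Du|^{pd}\,dx\Big)^{q/(pd)},
\]
with $c$ also depending on $R^n\|\omega\|_{L^\infty}$. From $[H(x,v)]^{d_1}\le c\bigl(|v|^{pd_1}+a_M^{d_1}|v|^{qd_1}\bigr)$ and the subadditivity of $t\mapsto t^{1/d_1}$ this gives
\[
	\Big(\intmean_{B_R}[H(x,v)]^{d_1}\,dx\Big)^{1/d_1}\le c\Big(\intmean_{B_R}|Du|^{pd}\,dx\Big)^{1/d}+c\,a_M\Big(\intmean_{B_R}|Du|^{pd}\,dx\Big)^{q/(pd)}.
\]
Since $|Du|^{pd}=(|Du|^{p})^{d}\le[H(x,Du)]^{d}$, the first term is already $\le c\bigl(\intmean_{B_R}[H(x,Du)]^{d}\,dx\bigr)^{1/d}$, so everything reduces to controlling the cross term $a_M\bigl(\intmean_{B_R}|Du|^{pd}\,dx\bigr)^{q/(pd)}$. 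I would stress that the improvement $d_1>1$ on the left costs nothing extra: it comes straight from the Sobolev embedding exponent $(pd)^*>q$, with no Gehring iteration.

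The core of the argument is a dichotomy on the size of $a_M$. In the degenerate case $a_M\le4[a]_{0,\beta}(2R)^\beta$, I would split $(\intmean_{B_R}|Du|^{pd})^{q/(pd)}=(\intmean_{B_R}|Du|^{pd})^{(q-p)/(pd)}(\intmean_{B_R}|Du|^{pd})^{1/d}$, bound $(\intmean_{B_R}|Du|^{pd})^{1/(pd)}\le(\intmean_{B_R}|Du|^{p})^{1/p}=|B_R|^{-1/p}\|Du\|_{L^p(B_R)}$ by Jensen, and combine with $a_M\le c[a]_{0,\beta}R^\beta$ to get $a_M(\intmean_{B_R}|Du|^{pd})^{(q-p)/(pd)}\le c\,[a]_{0,\beta}\,R^{\beta-n(q-p)/p}\,\|Du\|_{L^p(B_R)}^{q-p}$; the exponent $\beta-n(q-p)/p$ is positive \emph{precisely} because $q/p<1+\beta/n$, and $R<1$, so this factor is bounded by a constant depending only on $[a]_{0,\beta}$ and $\|Du\|_{L^p(B_R)}$, whence $a_M(\intmean_{B_R}|Du|^{pd})^{q/(pd)}\le c\bigl(\intmean_{B_R}[H(x,Du)]^{d}\,dx\bigr)^{1/d}$. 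In the non-degenerate case $a_M>4[a]_{0,\beta}(2R)^\beta$, the H\"older continuity of $a$ gives $\mathrm{osc}_{B_R}\,a\le[a]_{0,\beta}(2R)^\beta<\tfrac14a_M$, hence $a(x)\ge\tfrac34a_M$ on $B_R$ and $H(x,\cdot)$ is comparable, with constants depending only on $p,q$, to the fixed $N$-function $z\mapsto|z|^{p}+a_M|z|^{q}$; then Jensen (applied with $t\mapsto t^{q/p}$, then the $1/d$-th power) gives $(\intmean_{B_R}|Du|^{pd})^{q/(pd)}\le(\intmean_{B_R}|Du|^{qd})^{1/d}$, and $a(x)\ge\tfrac34a_M$ together with $[H(x,Du)]^{d}\ge a(x)^{d}|Du|^{qd}$ yields $a_M(\intmean_{B_R}|Du|^{qd})^{1/d}\le c\bigl(\intmean_{B_R}[H(x,Du)]^{d}\,dx\bigr)^{1/d}$. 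Either way the cross term is dominated by the right-hand side, and assembling the estimates proves \pref{colmin(1.18)}.

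I expect the main obstacle to be exactly this $a(x)|z|^{q}$-term together with the \emph{reverse-H\"older} shape of the right-hand side (the exponent $d_2<1$): it is this shape that forces running Sobolev--Poincar\'e at the reduced gradient exponent $pd$ --- legitimate only because $q<p^*$, i.e.\ because $q/p<1+\beta/n$ with $\beta\le1$ --- and that forces the degenerate/non-degenerate dichotomy, in which $q/p<1+\beta/n$ is used a second time through $R^{\beta-n(q-p)/p}\le1$ and the dependence of the constant on $\|Du\|_{L^p(B_R)}$ becomes genuinely unavoidable. The general weight $\omega$, by contrast, is a harmless perturbation absorbed into the constant via $R^n\|\omega\|_{L^\infty}$.
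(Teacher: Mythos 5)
Your proof is correct, and it reaches the same final inequality with the same constant dependence, but the mechanism by which you handle the general weight $\omega$ is genuinely different from what the paper does. The paper's proof is a one-line delegation: it says to run Colombo--Mingione's proof of \cite[Theorem~1.6]{colmin15-1} verbatim, except that CM's pointwise estimate (3.11) is replaced by the Riesz-potential bound
$\tfrac{|u(x)-\langle u\rangle_\omega|}{R}\le \tfrac{C}{R}\int_{B_R}\tfrac{|Du(y)|}{|x-y|^{n-1}}\,dy$
taken from \cite[Lemma~1.50]{malzie97}; in other words the weight is absorbed at the level of a fractional-integral representation, and the rest of CM's machinery (in particular the fractional-integration exponent conditions behind \pref{d2-1}--\pref{d2-2} of Remark~2.3) is inherited unchanged. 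You instead dispose of the weight at the outset by the elementary comparison $|\langle u\rangle_\omega-(u)_{B_R}|\le\|\omega\|_\infty|B_R|\intmean_{B_R}|u-(u)_{B_R}|$, reduce to the plain mean, and then reconstruct CM's core argument from scratch: Sobolev--Poincar\'e at the reduced exponent $pd$ (legitimate because $q<p^\ast$, which you correctly deduce from $q/p<1+\beta/n$ and $p>1$), plus the degenerate/non-degenerate dichotomy on $a_M$ versus $[a]_{0,\beta}R^\beta$. Your exponent bookkeeping for $d_1,d_2$ is therefore slightly different from CM's (you need $pd\ge1$ and $(pd)^\ast>q$ rather than the Riesz-kernel condition in \pref{d2-2}), but it is equally valid, and in fact your $d_1,d_2$ end up depending only on $n,p,q$. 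The trade-off: your route avoids the fine potential estimate from Mal\'y--Ziemer and is self-contained, at the cost of re-deriving the dichotomy; the paper's route is shorter on the page because it leans on CM's written proof as a black box with a single lemma swapped out.
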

\begin{proof}
We can proceed exactly as in the proof of \cite[Theorem 1.6]{colmin15-1} only replacing (3.11) of \cite{colmin15-1}
by
\[
	\frac{|u(x)-\langle u\rangle_\omega|}{R}
	\leq \frac{C}{R} \int_{B_R} \frac{ |Du(y)|}{|x-y|^{n-1} }dy,
\]
which is shown by \cite[Lemma 1.50]{malzie97} (see also the proof of \cite[Theorem 7]{dieett08}).
\end{proof}

From the above theorem, we have the following corollary.　

\begin{cor}\la{sobpoi2}
Assume that all conditions of Theorem \ref{sobpoi1} are satisfied, and let $D$ be a subset of $B_R$ 
with  positive measure. Then, 
there exists a constant $C$ depending only on $n,~p,~q,~ [a]_{0,\beta},~R^n/|D|$ and
$\|Du\|_{L^p(B_R)}$ and exponents $d_1>1>d_2$ depending only on
$n,~p,~q,~\beta$ such that
the following inequality holds whenever
$u \in W^{1, p(x)
}(B_R)$ satifies $u\equiv 0$ on $D$:
\be\la{sobpoiineq2}
	\left( \intmean_{B_R} \left[ H\left(x,\frac{u}{R}\right)\right]^{d_1} dx \right)^{\frac{1}{d_1}}
	\leq
	C\left( \intmean_{B_R} \left[ H\left(x,Du\right)\right]^{d_2} dx \right)^{\frac{1}{d_2}}.
\ee

\end{cor}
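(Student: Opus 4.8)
The plan is to deduce Corollary~\ref{sobpoi2} from Theorem~\ref{sobpoi1} by choosing an appropriate weight $\omega$ and showing that, for a function vanishing on $D$, the weighted mean $\langle u\rangle_\omega$ can be taken to be zero. Concretely, I would set $\omega := |D|^{-1}\chi_D$. This is admissible in Theorem~\ref{sobpoi1}: it is bounded with $\|\omega\|_{L^\infty} = |D|^{-1}$, nonnegative, and $\int_{B_R}\omega\,dx = |D|^{-1}|D| = 1$. With this choice, for any $u \in W^{1,p(x)}(B_R)$ with $u\equiv 0$ on $D$, we get $\langle u\rangle_\omega = |D|^{-1}\int_D u\,dx = 0$, so the left-hand side of \pref{colmin(1.18)} becomes exactly the left-hand side of \pref{sobpoiineq2}.

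The second point to check is the dependence of the constant. In Theorem~\ref{sobpoi1} the constant $C$ depends on $n, p, q, [a]_{0,\beta}, R^n\|\omega\|_{L^\infty}$ and $\|Du\|_{L^p(B_R)}$; with $\omega = |D|^{-1}\chi_D$ we have $R^n\|\omega\|_{L^\infty} = R^n/|D|$, which is precisely the quantity appearing in the statement of the corollary. The exponents $d_1 > 1 > d_2$ are unchanged, still depending only on $n, p, q, \beta$. One small subtlety: the corollary is stated for $u \in W^{1,p(x)}(B_R)$ rather than $W^{1,p}(B_R)$; since $p(x) \geq p$ would not be assumed here (in the application $p$ is a frozen constant and $p(x)$ the variable exponent with $p(x)\ge p_0$), I would note that on the bounded set $B_R$ the relevant integrability $Du \in L^p(B_R)$ still holds whenever it is needed — in fact in the intended use one has $p \le p(x)$ so $W^{1,p(x)}(B_R) \hookrightarrow W^{1,p}(B_R)$, and then Theorem~\ref{sobpoi1} applies directly.

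I do not expect a genuine obstacle here; the corollary is essentially a specialization of the theorem with a particular (non-smooth but bounded) weight. The only thing requiring a line of care is the admissibility of the discontinuous weight $\omega = |D|^{-1}\chi_D$ in Theorem~\ref{sobpoi1}: one should confirm that the proof of that theorem — which rests on the pointwise estimate $|u(x) - \langle u\rangle_\omega|/R \le (C/R)\int_{B_R} |Du(y)|/|x-y|^{n-1}\,dy$ from \cite[Lemma 1.50]{malzie97} — indeed goes through for any bounded nonnegative weight of unit mass, not merely for continuous ones. Since that Riesz-potential bound is proven in \cite{malzie97} precisely for $L^\infty$ weights of unit mass, this is immediate, and the corollary follows.

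\begin{proof}
Apply Theorem~\ref{sobpoi1} with the weight $\omega := |D|^{-1}\chi_D$. Since $D \subset B_R$ has positive measure, $\omega \in L^\infty(\R^n)$, $\omega \geq 0$, and $\int_{B_R}\omega\,dx = 1$, so $\omega$ is admissible, and moreover $R^n\|\omega\|_{L^\infty} = R^n/|D|$. For $u \in W^{1,p(x)}(B_R)$ with $u\equiv 0$ on $D$ we have
\[
	\langle u\rangle_\omega = \frac{1}{|D|}\int_D u(x)\,dx = 0,
\]
hence $u - \langle u\rangle_\omega = u$ and inequality \pref{colmin(1.18)} reduces to \pref{sobpoiineq2}. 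The constant $C$ inherits the dependence asserted in Theorem~\ref{sobpoi1}, namely on $n, p, q, [a]_{0,\beta}, R^n/|D|$ and $\|Du\|_{L^p(B_R)}$, and the exponents $d_1 > 1 > d_2$ depend only on $n, p, q, \beta$.
\end{proof}
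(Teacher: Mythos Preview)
Your proof is correct and follows exactly the same approach as the paper: choose the weight $\omega = |D|^{-1}\chi_D$ and apply Theorem~\ref{sobpoi1}, noting that $\langle u\rangle_\omega = 0$ when $u$ vanishes on $D$. Your version is in fact more detailed than the paper's, since you explicitly verify the admissibility of $\omega$ and track the constant dependence $R^n\|\omega\|_{L^\infty} = R^n/|D|$.
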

\begin{proof}
Choosing $\omega$ so that 
\[
	\omega(x) =
	\begin{cases}
	0 & x \in B_R\setminus D \\
	\frac{1}{|D|} & x \in D
	\end{cases}
\]
and
applying Theorem \ref{sobpoi1}, we get the assertion.
\end{proof}


\begin{rem}
In  \cite[Theorem 6.1]{colmin15-1}, and therefore also in the above theorem and
corollary,
the exponent $d_2\in (0,1)$  is chosen so that the following  conditions hold:
\begin{align}
	\frac{q}{p}  <  1+ \frac{\beta d_2}{n} \la{d2-1}\\
	\frac{p}{q(n-1)} +1 > \frac{1}{d_2}. \la{d2-2}
\end{align}
In fact, in \cite{colmin15-1}, they choose a constant $\gamma \in (1,p)$ so that
\[
	\frac{q}{p} < 1+ \frac{\alpha}{\gamma n}~~~\mathrm{and}~~~
	\frac{p+q(n-1)}{\gamma q(n-1)}>1,
\]
(see \cite[(3.6), (3.14)]{colmin15-1}), and put $d_2 = 1/\gamma$.
 Let us mention the that if $d_2$ satisfies \pref{d2-1} and \pref{d2-2}  for
some $q=q_0$ and $p=p_0$, then the same $d_2$ satisfies
these inequalities for any $q$ and $p$ with $q/p \leq q_0/p_0$.
\end{rem}

For any $y\in \Omega$ and $R>0$ with $B_R(x)\subset \Omega$  let us put
\begin{align}
	p_2(y,R) := \sup_{B_R(y)} p(x), ~~  & p_1(y,R) := \inf_{B_R(y)} p(x),\\
	q_2(y,R) := \sup_{B_R(y)} q(x), ~~ & q_1(y,R) := \inf_{B_R(y)} q(x).
\end{align}
We prove interior higher integrability of the gradient of a minimizer, similar results are contained in \cite{HHK}.

\begin{prop}\la{higher-int}
Let $u \in W^{1,p(x)}_\loc (\Omega)$ be a local minimizer of $\F$.
Then, for any compact subset $K \subset \Omega$,
$F(x,Du) \in L^{1+\delta_0}(K)$ and
there exists a positive constant $\delta_0$ and $C$
 depending only on the given data and $K$  such that
\be\la{high-1}
	\left(\intmean_{B_{R/2}(y)} F(x,Du)^{1+\delta_0} dx\right)^{\frac{1}{1+\delta_0}}
	\leq C + C \intmean_{B_R(y)} F(x,Du) dx
\ee
holds  for any $B_R(y)\Subset K$.
\end{prop}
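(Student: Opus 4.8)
The plan is to establish the reverse Hölder inequality \pref{high-1} by the classical Gehring--Giaquinta--Modica scheme: compare the minimizer on a ball with its boundary values via a Caccioppoli-type estimate, convert this into a reverse Hölder inequality with increasing supports using the Sobolev--Poincaré inequality of Corollary \ref{sobpoi2}, and then invoke Gehring's lemma to gain the higher exponent $1+\delta_0$. The delicate point throughout is that the exponents $p(x),q(x)$ vary, so quantities like $|Du|^{p(x)}$ cannot be handled by a single power; one must work locally on a ball $B_R(y)$ with the frozen exponents $p_1(y,R)\le p(x)\le p_2(y,R)$ and $q_1(y,R)\le q(x)\le q_2(y,R)$, and control the oscillation $p_2-p_1 \le [p]_{0,\sigma}R^\sigma$, $q_2-q_1\le [q]_{0,\sigma}R^\sigma$ by shrinking $R$.

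First I would derive a Caccioppoli inequality. Fix $B_R(y)\Subset K$, take a cutoff $\eta\in C_c^\infty(B_R(y))$ with $\eta\equiv 1$ on $B_{R/2}(y)$ and $|D\eta|\le C/R$, and test the minimality with $\varphi = -\eta(u-u_0)$ where $u_0$ is a suitable affine or constant comparison value (the mean $\langle u\rangle_{B_R}$ will do). Using $F(x,\cdot)$ convex and the structure $F(x,z)=|z|^{p(x)}+a(x)|z|^{q(x)}$, minimality gives
\[
	\int_{B_{R/2}(y)} F(x,Du)\,dx
	\le C\int_{B_R(y)} F\!\left(x,\frac{u-u_0}{R}\right) dx
	+ C\,|B_R(y)|,
\]
the additive constant coming from the inhomogeneity (the ``$1$'' that appears in \pref{high-1}) and from absorbing the variable-exponent error terms. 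Here one must be careful: $a(x)$ need not be comparable to $a(y)$ when $a(y)=0$, but the Hölder bound $a(x)\le a(x_0)+[a]_{0,\alpha}R^\alpha$ on $B_R$ (with $x_0$ the point realizing the infimum of $a$) together with the bound $[a]_{0,\alpha}R^\alpha \le C\,R^\alpha$ lets one estimate the $q$-phase term in the standard double-phase way; this is exactly where the constraint \pref{cond-p,q-2} on $q(x)/p(x)$ will be consumed.

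Next I would feed the right-hand side into Corollary \ref{sobpoi2} (equivalently Theorem \ref{sobpoi1} with $\omega=|B_R|^{-1}\chi_{B_R}$) applied with the constants $p=p_1(y,R)$, $q=q_2(y,R)$ and $\beta=\min\{\alpha,\sigma\}$: the remark following Corollary \ref{sobpoi2} guarantees that the exponents $d_1>1>d_2$ can be chosen uniformly, valid for all these nearby $(p,q)$ once $R$ is small, since $q_2(y,R)/p_1(y,R)$ stays below $1+\beta/n$. This yields
\[
	\intmean_{B_R(y)} H\!\left(x,\frac{u-u_0}{R}\right) dx
	\le C\left(\intmean_{B_R(y)} H(x,Du)^{d_2}\,dx\right)^{1/d_2},
\]
and after reconciling $H$ (built from the frozen constant exponents $p_1,q_2$) with $F$ (built from $p(x),q(x)$) — here one pays factors like $R^{-[p]_{0,\sigma}R^\sigma}$ and uses $|Du|\le 1+|Du|^{p(x)}$ to swap exponents, all of which tend to harmless constants as $R\to 0$ — one arrives at a reverse Hölder inequality
\[
	\intmean_{B_{R/2}(y)} F(x,Du)\,dx
	\le C\left(\intmean_{B_R(y)} F(x,Du)^{d_2}\,dx\right)^{1/d_2} + C
\]
with $d_2<1$. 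A covering argument over $K$ makes $C$ and $R_0$ uniform.

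Finally, Gehring's lemma (in the form with a nonzero additive constant, e.g.\ Giaquinta's or Giaquinta--Modica's version) upgrades this to the existence of $\delta_0>0$ with $F(x,Du)\in L^{1+\delta_0}_{\loc}$ and the estimate \pref{high-1}. The main obstacle, as indicated, is not the Gehring step but bookkeeping the variable-exponent errors in the Caccioppoli and Sobolev--Poincaré steps: one must verify that every exponent mismatch produces a factor bounded by $c(R)\to$ const as $R\to 0$, that the $a(x)$-phase is controlled uniformly even near its zero set using only the Hölder bound, and that the choice of $d_2$ (via the Remark) can indeed be frozen on each small ball. Once those are checked, the proof is the standard one.
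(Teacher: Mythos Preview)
Your proposal is correct and follows essentially the same route as the paper: Caccioppoli via minimality and a cutoff, Sobolev--Poincar\'e from Theorem \ref{sobpoi1} with frozen exponents, variable-exponent bookkeeping, then Gehring--Giaquinta--Modica. The paper's proof spells out two details you gloss over: it uses the hole-filling trick with two radii $t<s$ together with the iteration lemma \cite[Lemma~6.1]{giu03} to obtain the clean Caccioppoli inequality (your displayed estimate does not follow directly from a single comparison, since a term $\int_{B_R\setminus B_{R/2}}F(x,Du)$ survives on the right), and it freezes the coefficient as $\tilde a(x)=a(x)^{q_2/q(x)}\in C^{0,\beta}$ so that Theorem~\ref{sobpoi1} applies verbatim with exponents $p_2,q_2$, rather than invoking the pointwise bound $a(x)\le a(x_0)+[a]_{0,\alpha}R^\alpha$.
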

\begin{proof}
Let $K\subset \Omega $ be a compact subset
%
and $R_0 \in (0, \dist (K,\partial\Omega))$ a constant
such that
\be\la{def-R0}
	0< R_0^\sigma \leq \frac{p_0}{2^{1+\sigma}[q]_{0,\sigma}}
	\left( 1+\frac{\beta}{n}-\sup_{x\in\Omega}\frac{q(x)}{p(x)}\right).
\ee
For any $x_0 \in \stackrel {\circ }{K}$, put
\be\la{def_kp_0}
	\kappa_0 :=\frac{1}{4}\left( 1+\frac{\beta}{n} -
	\sup_{x \in B_R(x_0)} \frac{q(x)}{p(x)} \right) >0.
\ee
Then, letting $x_-\in \bar{B}_{R_0}(x_0)$ be a such that
$ p(x_-)= p_1(x_0,R_0)$, we have
\begin{align}
	\frac{q_2(x_0,R_0)}{p_1(x_0,R_0)}
	& =  \frac{q(x_-)+\left(q_2(x_0,R_0)-q(x_-)\right)}{p_1(x_0,R_0)}\nn\\
	& \leq \sup_{x\in B_{R_0}(x_0)} \frac{q(x)}{p(x)} + \frac{2^\sigma[q]_{0,\sigma} R_0^\sigma}{p_0}
	\nn\\
	& \leq
	\sup_{x\in B_{R_0}(x_0)}\frac{q(x)}{p(x)} + \frac{1}{2} \left(1+\frac{\beta}{n} -
	\sup_{x\in B_{R_0}(x_0)}\frac{q(x)}{p(x)}\right)\nn\\
	& = \frac{1}{2}\left(1+\frac{\beta}{n} + \sup_{x\in B_{R_0}(x_0)}\frac{q(x)}{p(x)}\right)	
	\leq 1+\frac{\beta}{n}-2\kappa_0
	\la{2.10}
\end{align}
The above estimate \pref{2.10} implies that
\be\la{sob-exp-cond}
	q_2(x_0, R_0) < (p_1(x_0, R_0))^\ast=\frac{np_1(x_0,\cb R_0)}{n-p_1(x_0,\cb R_0)}.
\ee

For any $B_R(y) \subset B_{R_0}(x_0)$ with $0<R<1$, and $0<t\le s \le R$, let
$\eta$ be a cut-off function such  that
$\eta \equiv 1$ on $B_t(y)$, $\eta \equiv 0$ outside $B_s(y)$ and
 $|D\eta|\leq \frac{2}{s-t}$.
Put $w:= u-\eta(u-u_R)$, where $u_R =\sintmean_{B_R(y)} u dx$.
Since
\[
	Dw= (1-\eta) Du + (u-u_R) D\eta,
\]
we have
\begin{align*}
	F(x,Dw) \leq ~& c_0\big[\big((1-\eta)|Du|\big)^{p(x)} +\left(|u-u_R|
	|D\eta|\right)^{p(x)}\\
	& +a(x) \big((1-\eta)|Du|\big)^{q(x)} + \left(
|u-u_R| 
	|D\eta|\right)^{q(x)}\big],
\end{align*}
where $c_0$ is a constant depending only on $\max_K q(x)$.
On the other hand, since  $F(x,Du) \in L^1$, we have
\[
	u \in W^{1,p(x)} \subset W^{1,p_1(x_0, R_0)} \subset L^{p_1(x_0,R_0)^\ast} \subset
	L^{p_2(x_0,R_0)} \subset L^{q(x)},
\]
 on $B_{R_0}(x_0)$.
 Thus, mentioning  also that $w=u$ outside $B_s(y)$, we see that $F(x,Dw) \in L^1 (K)$, namely $w$ is an admissible function. In the following part of the proof,  let us abbreviate
\[
	p_i:=p_i (y,R),~~ q_i :=q_i(y,R) ~~(i=1,2).
\]
Then, we have
\begin{align}
	& \int_{B_s(y)} F(x,Du)dx \leq \int_{B_s(y)} F(x,Dw)dx\nn\\
	\leq  ~& c_0\int_{B_s(y)}
	(1-\eta)^{p(x)} \big(|Du|^{p(x)} + a(x)|Du|^{ q(x)}\big)dx\nn\\
	& ~~ +c_0 \int_{B_s(y)} \left[
	\left|\frac{u-u_R}{s-t}\right|^{p(x)}+ a(x) \left|\frac{u-u_R}{s-t}\right|^{q(x)}\right] dx\nn\\
	\leq ~& c_0\int_{B_s(y)\setminus B_t(y)} F(x,Du) dx
	+\frac{c_0}{(s-t)^{p_2}}\int_{B_s(y)} |u-u_R|^{p(x)}\nn\\
	& ~~~~~ +
	\frac{c_0}{(s-t)^{q_2}}\int_{B_s(y)} a(x) |u-u_R|^{q(x)}dx
\end{align}
We can use \it hole-filling method. \rm
Add $c_0  \int_{B_s(y)\setminus B_t(y)} F(x,Du) dx$ to the both side and divide them
by $c_0+1$, then we get
\begin{align}
	& \int_{B_t (y)} F(x,Du)dx \nn\\
	\le ~&\frac{c_0}{c_0+1}\left( \int_{B_s (y)} F(x,Du)dx
	+ \frac{1}{(s-t)^{p_2}}\int_{B_s(y)} |u-u_R|^{p(x)}dx\right.\nn\\
	&~~~~ \left. + \frac{1}{(s-t)^{q_2}}\int_{B_s(y)} a(x) |u-u_R|^{q(x)}dx\right).
\end{align}
Using an iteration lemma \cite[Lemma 6.1]{giu03}, we see, for some constant
$C=C(c_0, p_2, q_2)$, that
\begin{align}
	& \int_{B_t (y)} F(x,Du)dx \nn\\
 	\leq ~ &  \frac{C}{(s-t)^{p_2}}\int_{B_s(y)} |u-u_R|^{p(x)}
	 + \frac{C}{(s-t)^{q_2}}\int_{B_s(y)} a(x) |u-u_R|^{q(x)}dx.
\end{align}
Putting $s=R$ and $t=R/2$, we have
\begin{align}
	& \int_{B_{\frac{R}{2}} (y)} F(x,Du)dx \nn\\
 	\leq ~ & \frac{C}{R^{p_2}}\int_{B_R(y)} |u-u_R|^{p(x)}
	 + \frac{C}{R^{q_2}}\int_{B_R(y)} a(x) |u-u_R|^{q(x)}dx\nn\\
	 \leq ~& CR^{p_1-p_2} \int_{B_R(y)} \left| \frac{u-u_R}{R}\right|^{p(x)}dx
	 +CR^{q_1-q_2} \int_{B_R(y)} a(x) \left| \frac{u-u_R}{R}\right|^{q(x)}dx\nn\\
	 \leq ~&  CR^{p_1-p_2} \int_{B_R(y)}
	 \left( 1+\left| \frac{u-u_R}{R}\right|\right)^{p_2}dx\nn\\
	 & ~~~~~~~~~ ~~~~~ ~~~~
	 +CR^{q_1-q_2} \int_{B_R(y)} \left( 1+ a(x)^{\frac{1}{q(x)}} \left| \frac{u-u_R}{R}\right|		\right)^{q_2}dx.
	 \la{est14}
\end{align}
Since $R^{p_1-p_2}$ and $R^{q_1-q_2}$ are bounded because of the H\"older continuity
of exponents $p(x)$ and $q(x)$,
putting
\[
	\tilde{a}(x):= \left(a(x)\right)^{\frac{q_2}{q(x)}},
\]
from \pref{est14}, we obtain the estimate
\begin{align}
& \int_{B_{\frac{R}{2}} (y)} F(x,Du)dx \nn\\
	 \leq ~& CR^n +
	 CR^n \intmean_{B_R (y)} \left(\left| \frac{u-u_R}{R}\right|^{p_2}dx
	 + \tilde{a}(x) \left| \frac{u-u_R}{R}\right|^{q_2}\right)dx\nn\\
	 =: ~& I +I\!I .
	 \la{est15}
\end{align}
In order to get the boundedness of  $R^{p_1-p_2}$ and $R^{q_1-q_2}$
the so-called ``log-\it H\"older continuity" \rm (see \cite[section 4.1]{DHHR}) is sufficient.
On the other hand by virtue of the H\"older continuity of $q(\cdot)$, we have that $\tilde{a} \in C^{0,\beta}~~(\beta =\min\{\alpha,\sigma\})$.
Let $d_2\in (0,1)$ be a constant satisfying \pref{d2-1} and \pref{d2-2} for $\beta= \min\{\alpha,\sigma\}$, $q=q_2(x_0,R_0)$ and $p=p_1(x_0, R_0)$.
Then, for any $B_R(y) \subset B_{R_0}(x_0)$, this $d_2$ satisfy \pref{d2-1} and \pref{d2-2} with $q=q_2(y,R)$ and $p=p_2(y,R)$.

By Theorem \ref{sobpoi1}, we can estimate $I\!I$ as follows.
\begin{align}
	I\!I \leq ~& C R^n
\left( \intmean_{B_R(y)}\left( |Du|^{p_2} + \tilde{a}(x)
	|Du|^{q_2}\right)^{d_2} dx \right)^{\frac{1}{d_2}} \nn\\
	\leq ~& C
R^n \left( \intmean_{B_R(y)} |Du|^{d_2 p_2} dx\right)^{\frac{1}{d_2}}
	+  C R^n  \cb \left(\intmean_{B_R(y)} \left(a(x)^{\frac{1}{q(x)}}
	|Du|\right)^{d_2 q_2} dx \right)^{\frac{1}{d_2}}.\la{est20}
\end{align}
As mentioned above, \pref{est20} holds for
 for any $B_R(y) \subset B_{R_0}(x_0)$
with same $d_2$.  Now, take $R>0$ sufficiently small
so that
\[
	d_2 p_2(y,R) < p_1(y,R) ~~\text{and}~~ d_2q_2(y,R) < q_1(y,R),
\]
and let $\theta \in (d_2,1)$ be a constant satisfying
\be
	d_2 p_2(y,R) < \theta p_1(y,R) ~~\text{and}~~ d_2q_2(y,R) < \theta q_1(y,R).
\ee
Then, using H\"older inequality, we can estimate the first term of the right hand side of
\pref{est20}  as follows.


\begin{align}\la{est22}
	 & \left(\intmean_{
B_R(y)
} |Du|^{d_2 p_2} dx\right)^{\frac{1}{d_2}}
	 \leq  \left(\intmean_{
B_R(y)
} |Du|^{\theta p_1} dx\right)^{\frac{p_2}{\theta p_1}}\nn\\
	 = ~&   \left(\intmean_{
B_R(y)
} |Du|^{\theta p_1} dx\right)^{\frac{p_2-p_1}{\theta p_1}}
	 \cdot \left(\intmean_{
B_R(y)
} |Du|^{\theta p_1} dx\right)^{\frac{1}{\theta}} \nn\\
	 \leq ~& \left(\intmean_{
B_R(y)
}( 1+|Du|^{p(x)}) dx\right)^{\frac{p_2-p_1}{\theta p_1}}
	 \cdot \left(\intmean_{
B_R(y)
} \left(1+ |Du|^{\theta p_1} \right) dx\right)^{\frac{1}{\theta}} .
\end{align}

Since,
\[
	\int_{
B_R(y)
} |Du|^{p(x)} dx \leq \F(u,B_R(y)) \leq \F(u,K)\cb
\]
and $u$ locally minimizes $\F$, $\int_{B_R(y)} |Du|^{p(x)} dx$ is bounded.
On the other hand, as mentioned after \pref{est14}, $R^{-(p_2-p_1)}$ is bounded.
So,
there exists a constant $c_1=c_1 (\F(u,K), p(x), d_2, n, \theta )$
\begin{align*}
	\left(\intmean_{B_R(y)} |Du|^{p(x)} dx\right)^{\frac{p_2-p_1}{\theta p_1}}
	\leq ~&
 (\omega_n R^n)^{\frac{-(p_2-p_1)}{\theta p_1}} \F(u,K)^{\frac{p_2-p_1}{\theta p_1}} \\
\leq ~ & c_1 (\F(u,K), p(x), d_2, n, \theta),
\end{align*}
where $\omega_n$ denotes the volume of a $n$-dimensional unit ball.
Thus, from \pref{est22}
we obtain for some positive constant $c_2 = c_2(c_1, \theta)$
\be\la{est23}
	\left(\intmean_{B_R(y)} |Du|^{d_2 p_2} dx\right)^{\frac{1}{d_2}} \leq
	c_2+c_2\left(\intmean_{B_R(y)} |Du|^{\theta p(x)} dx\right)^{\frac{1}{\theta}}.
\ee

Similarly, we can estimate the second term of the left hand side of \pref{est20}
as follows.
\begin{align}
	 &  \left( \intmean_{B_R(y)} \left(a(x)^{\frac{1}{q(x)}}
	|Du|\right)^{d_2 q_2} dx \right)^{\frac{1}{d_2}}
	\leq \left( \intmean_{B_R(y)} \left(a(x)^{\frac{1}{q(x)}}
	|Du|\right)^{\theta q_1} dx \right)^\frac{q_2}{\theta q_1}\nn\\
	\leq ~ & \left( \intmean_{B_R(y)} \left(a(x)^{\frac{1}{q(x)}}
	|Du|\right)^{\theta q_1} dx \right)^\frac{q_2-q_1}{\theta q_1}
	\left( \intmean_{B_R(y)} \left(a(x)^{\frac{1}{q(x)}}
	|Du|\right)^{\theta q_1} dx \right)^\frac{1}{\theta}
	\nn\\
	\leq ~ & \left( \intmean_{B_R(y)} \left( 1+ \left(a(x)^{\frac{1}{q(x)}}
	|Du|\right)^{q(x)} \right) dx \right)^\frac{q_2-q_1}{\theta q_1}\nn\\
	& ~~ ~~  ~~ ~~  \cdot
	\left( \intmean_{B_R(y)}\left(1+  \left(a(x)^{\frac{1}{q(x)}}
	|Du|\right)^{\theta q(x)} \right)dx \right)^\frac{1}{\theta}.
	\la{est24}
\end{align}
As above, using local minimality of $u$ and the fact that
$R^{-(q_2-q_1)}$ is bounded, we have for a positive constant $c_3=c_3(\F(u,K), q(x), d_2, n, \theta)$
\be
	\left( \intmean_{B_R(y)} \left(a(x)^{\frac{1}{q(x)}}
	|Du|\right)^{d_2 q_2} dx \right)^{\frac{q_2 - q_1}{\theta q_1} }
	\leq c_3 (\F(u,K), q(x), d_2, n, \theta).
\ee
Thus, we obtain for some positive constant $c_4=c_4(c_3, \theta)$
\begin{align}
	\left( \intmean_{B_R(y)} \left(a(x)^{\frac{1}{q(x)}}
	|Du|\right)^{d_2 q_2} dx \right)^{\frac{1}{d_2}}
	\leq c_4 + c_4 \left( \intmean_{B_R(y)} \left(a(x)^{\frac{1}{q(x)}}
	|Du|\right)^{\theta q(x)} dx \right)^\frac{1}{\theta}.\la{est26}
\end{align}
Combining \pref{est15}, \pref{est20}, \pref{est23} and \pref{est26},
we see that there exists a constant $C$ depending on the given data
 and $\F(u,K)$  such that
\be
	\intmean_{B_{\frac{R}{2}} (y)}F(x,Du) dx
	\leq C + C \left(\intmean_{B_R(y)} F(x,Du)^\theta dx \right)^{\frac{1}{\theta}}
\ee
for any $B_R(y) \subset B_{R_0} \subset K \Subset \Omega$.
Now, by virtue of the \it reverse H\"older inequality with increasing domain \rm
due to Giaquinta-Modica \cite{giamod79}, we get the assertion.
\end{proof}

For $\delta_0$ determined in Proposition \ref{higher-int}, in what follows, we always take $R>0$ sufficiently small so that
\be\la{choice of R}
	\left(1+ \frac{\delta_0}{2}\right)p_2(y,R) \leq (1+\delta_0)p_1(y,R)~~
	\text{and}~~\left(1+ \frac{\delta_0}{2}\right)q_2(y,R) \leq (1+\delta_0)q_1(y,R).
\ee

We need also higher integrability results on the neighborhood  of the
boundary.
Let us use the following notation: for $T>0$ we put
\begin{align*}
     &B_T := B_T(0), ~~B^+_T:= \{ x\in \R^n ~;~|x|<T, ~x^n >0\}, \\
     &\Gamma_T := \{ x\in \R^n ~;~ |x|<T, ~x^n=0\}, 
\end{align*}
We say ``$f=g$ on $\Gamma_T$" when for any $\eta \in C^\infty_0(B_T)$
we have $(f-g)\eta \in W^{1,1}_0(B^+_T)$.
For $y\in B_T$, we write
\[
	\Omega_r:= B_r(y) \cap B^+_T.
\]
Then, we have the following proposition on the higher integrability near the boundary,
independently proved in \cite[Lemma 5]{DO} , see also \cite[Lemma 5]{DM} for the manifold constrained case.
\begin{prop}\la{higher-bdry}\cb
Let $a(x)$, $q$ and $p$ satisfy the same conditions in Theorem \ref{sobpoi1} and let
for $A \subset B^+_T$
\[
	\H(w,A) :=\int_A H(x,w) dx, ~~H(x,z) := |z|^p+a(x)|z|^q.
\]
$u \in W^{1,p}(B^+_T)$ be a given function with
\[
	\int_{B^+_T} \left(|Du|^p + a(x) |Du|^q \right)^{1+\delta_0}dx < \infty,
\]
for some $\delta_0 >$.
Assume that $v\in W^{1,p} (B^+(T))$ be a local minimizer of $\H$ in the class
\[
	\{w \in W^{1,p}(B^+_T)~;~ u=w~~\text{on}~~\Gamma_T\}
\]
Then, for any $S \in (0,T)$, there exists a constants $\delta \in (0,\delta_0)$
and $C>0$ such that for any
$y \in B^+_S$ and $R\in (0,T-S)$ we have
\[
\left( \intmean_{\Omega_{R/2} }\left( H(x, Dv)\right)^{1 +\delta} dx \right)^{\frac{1}{1+\delta}} \leq
C \intmean_{\Omega_{R} } H(x,Dv) dx +
C \left( \intmean_{\Omega_R} \left( H(x, Du)\right)^{1 +\delta} dx \right)^{\frac{1}{1+\delta}} .
\]

\end{prop}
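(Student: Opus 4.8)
The plan is to run a standard Gehring-type higher-integrability argument for the minimizer $v$ up to the flat boundary $\Gamma_T$, using the boundary Sobolev--Poincar\'e inequality (Corollary \ref{sobpoi2}) in place of the interior one, and exploiting the comparison with the given datum $u$ only through its own higher integrability. First I would fix $S\in(0,T)$, take $y\in B^+_S$ and $R\in(0,T-S)$, and distinguish two regimes according to the position of $B_R(y)$: either $B_R(y)\subset B^+_T$ (an interior ball), in which case the conclusion is exactly Proposition \ref{higher-int} applied to $v$ on the ball; or $B_R(y)$ meets $\{x^n=0\}$, i.e. $\dist(y,\Gamma_T)<R$, which is the genuinely new case. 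In the boundary case one enlarges to a ball $B_{2R}(y^*)$ centered at the projection $y^*$ of $y$ onto $\Gamma_T$, so that $\Omega_R\subset B_{2R}(y^*)\cap B^+_T$ and $|B_{2R}(y^*)\cap B^+_T|\ge c(n)R^n$, giving a lower bound on the measure of the region needed to apply Corollary \ref{sobpoi2}.

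Next I would set up a Caccioppoli inequality on $\Omega_s\subset\Omega_R$ for $0<t<s\le R$. Choose a cut-off $\eta$ with $\eta\equiv1$ on $\Omega_t$, $\supp\eta\subset B_s(y)$, $|D\eta|\le 2/(s-t)$, and compare $v$ with $w:=v-\eta(v-\ell)$, where $\ell$ is chosen as follows: if the ball is interior, $\ell$ is the mean $\langle v\rangle$ over $\Omega_R$ (or over a fixed sub-ball), whereas if the ball touches $\Gamma_T$ we must respect the boundary condition, so we take $\ell$ to be the boundary datum in an averaged sense and work with $v-u$ rather than $v-\ell$, using that $v-u\in W^{1,p}_0$-type space near $\Gamma_T$ so that $v-\eta(v-u)$ is still admissible (equals $u$ on $\Gamma_T$). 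This is the point where the term $\intmean_{\Omega_R}(H(x,Du))^{1+\delta}$ enters the estimate. Minimality of $v$ then gives, after the hole-filling trick (add $c_0\int_{\Omega_s\setminus\Omega_t}H(x,Dv)$ to both sides, divide by $c_0+1$) and the iteration lemma \cite[Lemma 6.1]{giu03}, a Caccioppoli-type bound
\[
\int_{\Omega_{R/2}} H(x,Dv)\,dx \le \frac{C}{R^{p_2}}\int_{\Omega_R}|v-\ell|^{p}\,dx + \frac{C}{R^{q_2}}\int_{\Omega_R}a(x)|v-\ell|^{q}\,dx + C\int_{\Omega_R}\bigl(H(x,Du)\bigr)\,dx,
\]
with the $u$-term present only in the boundary case. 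Rewriting the first two terms as $R^n\intmean_{\Omega_R}H\!\left(x,\frac{v-\ell}{R}\right)dx$ (here the constant exponents $p,q$ of this proposition make the scaling clean, unlike in Proposition \ref{higher-int}), I would apply Corollary \ref{sobpoi2} — in the boundary case, with $D$ the portion of $B_{2R}(y^*)\cap B^+_T$ on which $v-u$ vanishes in the appropriate trace sense, whose measure is $\gtrsim R^n$ — to bound this by $C\bigl(\intmean_{\Omega_R}(H(x,Dv))^{d_2}\bigr)^{1/d_2}$ plus a $C\bigl(\intmean_{\Omega_R}(H(x,Du))^{d_2}\bigr)^{1/d_2}$ contribution; since $d_2<1<1+\delta_0$, H\"older upgrades the $u$-term to the $(1+\delta_0)$-power form, which is finite by hypothesis.

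This yields a reverse-H\"older inequality with increasing support and an extra (uniformly $L^{1+\delta_0}$) right-hand-side term: for all such balls,
\[
\intmean_{\Omega_{R/2}} H(x,Dv)\,dx \le C\left(\intmean_{\Omega_R}\bigl(H(x,Dv)\bigr)^{d_2}dx\right)^{1/d_2} + C\left(\intmean_{\Omega_R}\bigl(H(x,Du)\bigr)^{1+\delta_0}dx\right)^{1/(1+\delta_0)}.
\]
Then I would invoke the Gehring--Giaquinta--Modica lemma in its version with a forcing term (the one allowing an $L^{1+\delta_0}$ inhomogeneity, e.g. as in \cite{giamod79} or \cite{giu03}) to conclude that $H(x,Dv)\in L^{1+\delta}_{\loc}$ near $\Gamma_T$ for some $\delta\in(0,\delta_0)$, with the asserted quantitative estimate. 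The interior balls are already handled by Proposition \ref{higher-int}; a covering/patching argument over $B^+_S$ combines the two cases into the stated uniform bound.

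The main obstacle I anticipate is the boundary case of the Caccioppoli estimate: one must choose the comparison function so that it is admissible (retains the boundary values $u$ on $\Gamma_T$) while still producing a Poincar\'e-type term to which Corollary \ref{sobpoi2} applies, and one must verify that the relevant vanishing set has measure comparable to $R^n$ uniformly in $y\in B^+_S$, $R\in(0,T-S)$ — in particular that the constant $R^n/|D|$ in Corollary \ref{sobpoi2} stays bounded. The verification that the same exponents $d_1>1>d_2$ from Corollary \ref{sobpoi2} can be used on all balls (via the remark after the corollary, using $q/p\le q_2(x_0,R_0)/p_1(x_0,R_0)$) and the bookkeeping of the H\"older-continuity factors $R^{p_1-p_2}$, $R^{q_1-q_2}$ are routine, as in Proposition \ref{higher-int}.
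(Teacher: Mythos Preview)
Your overall strategy---Caccioppoli inequality using $v-\eta(v-u)$ as comparison, then Sobolev--Poincar\'e via Corollary \ref{sobpoi2}, then Gehring with a forcing term---is exactly the paper's approach, and the case split (interior ball vs.\ ball meeting $\Gamma_T$) is right in spirit. Two points need correction, though.

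First, the step where you invoke Corollary \ref{sobpoi2} is not set up correctly. You describe $D$ as ``the portion of $B_{2R}(y^*)\cap B^+_T$ on which $v-u$ vanishes in the appropriate trace sense, whose measure is $\gtrsim R^n$''. But inside $B^+_T$ the function $v-u$ vanishes only on $\Gamma_T$, which has Lebesgue measure zero, so Corollary \ref{sobpoi2} cannot be applied with that $D$. The fix---which the paper carries out at the very start of its proof---is to extend $u$, $v$, $Du$, $Dv$ by zero on $B_T\setminus B^+_T$. Then $v-u\in W^{1,p}$ across $\Gamma_T$ (since its trace vanishes there), and on the full ball $B_R(x_0)$ the extended $v-u$ vanishes identically on $D:=B_R(x_0)\cap\{x^n<0\}$. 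In the boundary regime (the paper uses $x_0^n\le \tfrac{3}{4}R$) one has $|D|\ge c(n)R^n$, so $R^n/|D|$ is bounded uniformly, and Corollary \ref{sobpoi2} applies on the \emph{full} ball $B_R(x_0)$ with no need to project to $y^*$ or enlarge radii. This zero-extension trick is exactly what you flagged as ``the main obstacle'', and it resolves it cleanly.

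Second, your final paragraph about the exponents $p_1,p_2,q_1,q_2$, the factors $R^{p_1-p_2}$, $R^{q_1-q_2}$, and choosing $d_2$ uniformly, as well as your appeal to Proposition \ref{higher-int} for the interior balls, is misplaced here: in Proposition \ref{higher-bdry} the exponents $p,q$ are \emph{constants}, so none of that variable-exponent bookkeeping is needed, and Proposition \ref{higher-int} (which concerns the variable-exponent functional $\F$) is not the right reference for the interior case. The paper simply quotes the interior reverse H\"older inequality for the constant-exponent $H$ from \cite{colmin15-1} when $x_0^n>\tfrac{3}{4}R$.
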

\begin{proof}
For convenience, we extend $u, v, Du, Dv$ to be zero in $B_T \setminus B^+_T$.
Of course, because extended $u, v$ may have discontinuity  on $\Gamma_T$,
they are not always in $W^{1, p}_\loc (B_T)$, and therefore
$Du,  Dv$ do not necessarily coincide with distributional derivatives of $u, v$ on $B(T)$.
On the other hand, since $u=v$ on $\Gamma(T)$, $u-v$ is
in the class $W^{1,p}(B(S))$ and $Du-Dv$ can be regarded as
the weak derivatives of $u-v$ on $B(S)$ for any $S < T$.

Let $R$ be a positive constant satisfying $R \le (T-S)/2$.
For $x_0\in B^+_{S}$,
we treat the two cases $x_0^n \leq \frac{3}{4}R$ and $x_0^n > \frac{3}{4}R$ separately.

\medskip
 \noindent
 \bf Case 1. \rm
Suppose that $x_0^n\leq \frac{3}{4}R$.
Take radii $s, t$ so that  $0<R/2 \leq t<s \leq R$
and choose a $\eta\in C^\infty_0(B_T)$ 
such that $0\leq \eta \leq 1$, $\eta \equiv 1$ on $B_t$,
 $\supp ~\eta \subset B_s$ and $|D\eta| \leq 2/(s-t)$.
Defining
\[
	\varphi := \eta (v-u),
\]
we see that $\varphi \in W^{1,1}_0 (B^+_T)$ with  $\supp~ \varphi \subset B_s$,
 and that
\[
    D(v-\varphi) = (1-\eta)Dv - (v-u)D\eta + \eta Du.
\]
Then, by virtue of the minimality of $v$,
    for a positive constant $c_4$ depending only on $q$,
    we have 
\begin{align*}
	\int_{\Omega_t}  H(x,Dv) dx \leq  ~ &
	 \int_{\Omega_s}  H(x,Dv)  dx \leq
	\int_{\Omega_s} H(x,D(v- \varphi))  dx \\
	=  ~& \int_{\Omega_s} \left( |D(v- \varphi)|^p + a(x)|D(v- \varphi)|^q \right) dx \\
	\leq ~& c_4 \int_{\Omega_s \setminus \Omega_t} \left( |D v|^p + a(x)|D v|^q \right) dx
	+ c_4 \int_{\Omega_s} \left( |D u|^p +  a(x)|D u|^q \right) dx \\
	& ~~ ~~ ~~ +  c_4 \int_{\Omega_s} \left(\left( \frac{2}{s - t} \right)^p |v - u|^p +
	a(x)  \left(\frac{2}{s - t} \right)^q |v - u|^q \right) dx \\
	 \leq ~&  c_4 \int_{\Omega_s \setminus \Omega_t} \left( |D v|^p + a(x)|D v|^q \right) dx
	  + c_4 \int_{\Omega_s} \left( |D u|^p + a(x)|D u|^q \right) dx \\
	& ~~~ ~~~ ~~~ +  c_4  \left( \frac{2}{s - t} \right)^p \int_{\Omega_s}  |v - u|^p dx
	+ c_4 \left( \frac{2}{s - t} \right)^q \int_{\Omega_s} a(x)   |v - u|^q
	dx.
\end{align*}
 Now, we use {\it the hole filling method}
 as in the proof of Proposition \ref{higher-int}.
Namely,
 adding
\[
	c_4 \int_{\Omega_t} \left( |D v|^p + a(x)|D v|^q \right) dx
\]
 and dividing both side by $c_4+1$, we obtain
\begin{align*}
	& \int_{\Omega_t} H(x, Dv)  dx \\
	\leq ~& \frac{c_4}{c_4 +1} \left( \int_{\Omega_s} H(x, Dv) dx
	+ \int_{\Omega_s} H(x, Du) dx \right.\\
	& ~~ ~~ ~~ + \left. \frac{1}{(s - t)^p} \int_{\Omega_s}  |v - u|^p dx +
	\frac{1}{(s - t)^q} \int_{\Omega_s} a(x)   |v - u|^q dx \right),
\end{align*}
Using 
the iteration lemma \cite[Lemma 6.1]{giu03},
we get
 for some constant $C= C(c_4, p, q)$
\begin{align*}
	\int_{\Omega_t} H(x, Dv) dx \leq ~ & C \int_{\Omega_s} H(x, Du) dx \\
	& ~~ ~~ ~~+ \frac{C}{(s - t)^p} \int_{\Omega_s}  |v - u|^p dx
	+ \frac{C}{(s - t)^q} \int_{\Omega_s} a(x)   |v - u|^q dx.
\end{align*}
Putting  $t=R/2$ and  $s=R$, we have
\[
	\int_{\Omega_{R/2}} H(x, Dv) dx
	\leq C \int_{\Omega_{R}} H\left( x, \frac{v-u}{R}\right) dx
	+ C \int_{\Omega_{R}} H(x, Du) dx.
\]
 Let us now consider the mean integral in all the terms, we obtain
\[
	\intmean_{\Omega_{R/2}} H(x,Dv) dx \leq
 	C \intmean_{\Omega_{R}}  H(x,Du) dx + C \intmean_{\Omega_{R}}  H\left(x,\frac{ v-u }{R}
	\right) dx.
\]
Since we are assuming that $x_0^n\leq \frac{3}{4}R$
we can apply Corollary \ref{sobpoi2} with a constant independent on $R$
for the last term in the right hand side and get
\[
	\intmean_{\Omega_{R/2}} H(x,Dv) dx
	\leq C \intmean_{\Omega_{R}}  H(x,Du) dx + C \
 	\left( \intmean_{\Omega_{R}} (H(x, D(v-u)))^{d_2} dx \right)^{\frac{1}{d_2}}.
\]
Taking into consideration that $d_2 \,<\, 1 $ we share in the last term $Dv$ and $Du$, apply H\"older inequality
 for the integral of $H(x,Du)^{d_2}$,  and obtain
\be\la{est_case1}
	\intmean_{\Omega_{R/2}} H(x,Dv) dx \leq
	C \intmean_{\Omega_{R} } H(x,Du) dx +
	C \left( \intmean_{\Omega_{R}} \left( H(x, Dv)\right)^{d_2} dx \right)^{\frac{1}{d_2}} .
\ee

\noindent
\bf Case 2. \rm
Let us deal with the case that $x_0^n>\frac{3}{4}R$.
In this case, since $B_{3R/4}(x_0) \Subset B^+_T$,
we can proceed as in \cite[9. Proof of Theorem 1.1:(1.8)]{colmin15-1},  slightly
modifying the radii, to get
\begin{align}
	& \intmean_{\Omega_{R/2}} H(x,Dv)dx = \intmean_{B_{R/2}} H(x,Dv)dx \nn\\
	\leq ~& C\left( \intmean_{B_{3R/4}} \left( H(x, Dv)\right)^{d_2} dx \right)^{\frac{1}{d_2}}
	\leq C^\prime  \left( \intmean_{\Omega_{R}} \left( H(x, Dv)\right)^{d_2} dx \right)^{\frac{1}{d_2}}.
	\la{est_case2}
\end{align}

Thus, we see that \pref{est_case1} holds for every $0<R<(S-T)/2$.
Now, the reverse H\"older inequality allows us to obtain
\[
\left( \intmean_{\Omega_R} \left( H(x, Dv)\right)^{1 +\delta} dx \right)^{\frac{1}{1+\delta}} \leq
C \intmean_{\Omega_{\frac{R}{2} } } H(x,Dv) dx +
C \left( \intmean_{\Omega_R} \left( H(x, Du)\right)^{1 +\delta} dx \right)^{\frac{1}{1+\delta}}.
\]
\end{proof}

By virtue of \cite[Theorem 1.1]{colmin15-1} and Proposition \ref{higher-bdry}, we have the following
global higher integrability for functions which minimize $\H$ with Dirichlet boundary condition.
\begin{cor}\la{higher-global}
Let $a(x)$, $q$ and $p$ satisfy the same conditions in Theorem \ref{sobpoi1} and $\delta_2\in (0,1)$
be a some constant.
Assume that $u \in W^{1,(1+\delta_1)p}(B_R(y))$ be a given function with
\[
	\int_{B_R(y)} H(x,Du)^{1+\delta_1} dx := \int_{B_R(y)} \left( |Du|^p+a(x)|Dv|^q\right)^{1+\delta_1} dx\leq C
\]
for some constant $C>0$.
Let $v \in W^{1,p}(B_R(y))$ be a minimizer of
\[
	\H(w, B_R(y):= \int_{B_R(y)}
	H(x,Dw) dx
\]
in the class
\[
	u+W^{1,p}_0(B_R(y))=\{ w \in W^{1,p}(B_R(y))~;~ u-w \in W^{1,p}_0(B_R(x_0))\}.
\]
Then, for some $\delta_2 \in (0,\delta_1)$ and for any $\delta_3 \in (0,\delta_2)$,
we have $H(x,Dv) \in L^{1+\delta}(B_R(y))$ and
\be\la{2.28}
	\int_{B_R} \left( H(x,Dv) \right)^{1+\delta_3} dx \leq C
	\int_{B_R} \left( H(x,Du) \right)^{1+\delta_3} dx.
\ee
\end{cor}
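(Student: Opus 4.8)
The plan is to reduce \pref{2.28} to a single reverse H\"older inequality holding on all small balls that meet $B_R(y)$, obtained by combining the interior higher integrability of Colombo--Mingione with the argument of Proposition \ref{higher-bdry} near $\partial B_R(y)$, and then to self-improve it by the reverse H\"older inequality with increasing support of Giaquinta--Modica \cite{giamod79}.

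First I would extend $v-u$ and $D(v-u)$ by zero outside $B_R(y)$, which is legitimate because $v-u\in W^{1,p}_0(B_R(y))$, and fix a $d_2\in(0,1)$ as in Corollary \ref{sobpoi2} (available by the hypotheses on $a,p,q$). On a ball $B_\rho(z)\Subset B_R(y)$ the function $v$ is a local minimizer of $\H$, so \cite[Theorem 1.1, (1.8)]{colmin15-1} --- more precisely the Caccioppoli plus Sobolev--Poincar\'e step in its proof --- gives
\[
\intmean_{B_{\rho/2}(z)}H(x,Dv)\,dx\le C\left(\intmean_{B_\rho(z)}H(x,Dv)^{d_2}\,dx\right)^{1/d_2}.
\]
On a ball $B_\rho(z)$ reaching outside $B_R(y)$ I would rerun the hole-filling computation from the proof of Proposition \ref{higher-bdry} with the competitor $v-\eta(v-u)$, which is admissible since $\eta(v-u)\in W^{1,p}_0(B_R(y))$ for any cut-off $\eta$; the only change is that $\partial B_R(y)$ plays the role of the flat boundary and Corollary \ref{sobpoi2} is applied on $B_\rho(z)$ to $v-u$ with $D:=B_\rho(z)\setminus B_R(y)$, which satisfies $|D|\ge c(n)\rho^n$ because the exterior of a ball has uniform measure density, so the constant is uniform in $z$ and $\rho$. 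After the same splitting of $D(v-u)$ and H\"older/Young inequalities as in the passage leading to \pref{est_case1}, this gives
\[
\intmean_{B_{\rho/2}(z)\cap B_R(y)}H(x,Dv)\,dx\le C\left(\intmean_{B_\rho(z)\cap B_R(y)}H(x,Dv)^{d_2}\,dx\right)^{1/d_2}+C\intmean_{B_\rho(z)\cap B_R(y)}H(x,Du)\,dx.
\]

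Writing $g:=H(x,Dv)^{d_2}$ and $f:=H(x,Du)^{d_2}$ (both extended by zero), and using that a ball has measure density from both sides so that $|B_\rho(z)\cap B_R(y)|$ is comparable with $|B_\rho(z)|$, the two cases combine into the single reverse H\"older inequality
\[
\intmean_{B_{\rho/2}(z)}g^{1/d_2}\,dx\le C\left(\intmean_{B_\rho(z)}g\,dx\right)^{1/d_2}+C\intmean_{B_\rho(z)}f^{1/d_2}\,dx,
\]
valid for all $B_\rho(z)$ with $\rho$ comparable with $R$. Since by hypothesis $f^{(1+\delta_1)/d_2}=H(x,Du)^{1+\delta_1}\in L^1(B_R(y))$, Giaquinta--Modica \cite{giamod79} produces a $\delta_2\in(0,\delta_1)$ such that $H(x,Dv)\in L^{1+\delta_3}(B_R(y))$ for every $\delta_3\in(0,\delta_2)$, with
\[
\left(\intmean_{B_{\rho/2}(z)}H(x,Dv)^{1+\delta_3}\,dx\right)^{\frac{1}{1+\delta_3}}\le C\intmean_{B_\rho(z)}H(x,Dv)\,dx+C\left(\intmean_{B_\rho(z)}H(x,Du)^{1+\delta_3}\,dx\right)^{\frac{1}{1+\delta_3}}.
\]
Covering $B_R(y)$ by finitely many such balls of a fixed radius comparable with $R$, summing, and bounding $\int_{B_R(y)}H(x,Dv)\,dx=\H(v,B_R(y))\le\H(u,B_R(y))=\int_{B_R(y)}H(x,Du)\,dx$ with H\"older's inequality, then yields \pref{2.28}. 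No additive constant is ever generated along the way because $H(x,\cdot)$ has homogeneous growth and all competitors are built from $v-u$ only.

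The step that I expect to require the most care is the boundary estimate: one must make sure that the argument of Proposition \ref{higher-bdry}, stated there for the flat piece $\Gamma_T$, genuinely transfers to the sphere $\partial B_R(y)$ with constants uniform in the centre $z$ and in the radius $\rho$ down to a scale comparable with $R$. I expect no real difficulty --- one can either argue directly on $B_R(y)$ as sketched above (using that the exterior of a ball satisfies a uniform measure-density condition, so that Corollary \ref{sobpoi2} applies with a constant depending only on $n$), or flatten $\partial B_R(y)$ locally by a bi-Lipschitz map with uniformly controlled constants, which preserves the double phase structure since $p,q$ are constants and $a$ composed with a bi-Lipschitz map is still in $C^{0,\beta}$, and then quote Proposition \ref{higher-bdry} verbatim --- but it is the point that has to be written out with care. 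One should also note that the hypotheses of Proposition \ref{higher-bdry} hold here with $\delta_0=\delta_1$.
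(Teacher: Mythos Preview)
Your proposal is correct and follows essentially the same route as the paper: combine the interior reverse H\"older inequality of \cite[Theorem 1.1]{colmin15-1} with the boundary reverse H\"older inequality of Proposition \ref{higher-bdry}, apply the Giaquinta--Modica self-improvement \cite{giamod79}, cover $B_R(y)$, and conclude via the minimality of $v$ together with H\"older's inequality. The paper compresses these steps into two lines (``from \cite[Theorem 1.1]{colmin15-1}, Proposition \ref{higher-bdry} and covering argument''), while you spell out the mechanism and, in particular, correctly isolate the one point the paper leaves implicit: Proposition \ref{higher-bdry} is stated for a flat boundary piece, so one must either flatten $\partial B_R(y)$ locally or, as you do, run the hole-filling directly on $B_\rho(z)\cap B_R(y)$ and invoke Corollary \ref{sobpoi2} using the exterior measure-density of a ball. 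Two small points of phrasing to tighten: the reverse H\"older inequality must hold for \emph{all} $\rho$ up to a fixed fraction of $R$, not just for $\rho$ ``comparable with $R$''; and your interior/boundary dichotomy should be stated as $\mathrm{dist}(z,\partial B_R(y))>\tfrac{3}{4}\rho$ versus $\le \tfrac{3}{4}\rho$ (mirroring Cases 1 and 2 of Proposition \ref{higher-bdry}), so that in the second case $|B_\rho(z)\setminus B_R(y)|\ge c(n)\rho^n$ is guaranteed by convexity.
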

\begin{proof}
From
\cite[Theorem 1.1]{colmin15-1}, Proposition
\ref{higher-bdry} and  covering argument, we have
\[
	\left( \intmean_{B_R} \left( H(x, Dv)\right)^{1 +\delta} dx \right)^{\frac{1}{1+\delta}} \leq
	C \intmean_{B_R } H(x,Dv)  dx +
	C \left( \intmean_{B_R} \left( H(x, Du)\right)^{1 +\delta} dx \right)^{\frac{1}{1+\delta}}
\]
and then, by the minimality of $v$,
\[
	\left( \intmean_{B_R} \left( H(x, Dv)\right)^{1 +\delta} dx \right)^{\frac{1}{1+\delta}}
	\leq C \intmean_{B_R} H(x,Du) dx +
	C \left( \intmean_{B_R} \left( H(x, Du)\right)^{1 +\delta} dx \right)^{\frac{1}{1+\delta}}
\]
Once again we use the H\"older inequality for the first term of the
right-hand side that  gives us the assertion.
\end{proof}

\section{Proof of the main theorem}
\setcounter{equation}{0}
 In this section we prove Theorem \ref{main}.
 We employ the so-called {\it direct approach}, namely we consider a \it frozen functional \rm
 for which the regularity theory has been established in \cite{colmin15-1} and compare
 a local minimizer of the frozen functional with $u$ under consideration.

For a constant $p>1$, let us define the auxiliary vector field $V_p: \R^n \to \R^n$
as
\be\la{def-V_p}
	V_p(z) := |z|^{p-2}z.
\ee
Let mention that $V_p$ satisfies
\be\la{V_p}
	|V_p(z)|^2 = |z|^p~~\text{and}~~
	|V_p(z_1) - V_p(z_2)| \approx (|z_1|+|z_2|)^{\frac{p-2}{2}} |z_1-z_2|.
\ee

\medskip
\noindent
{\it Proof of Theorem \ref{main}.}  \, We divide the proof into two parts.
We prove the H\"older continuity of $u$ in {\bf Part 1}, and
of the gradient $Du$ in {\bf Part 2}.

\medskip
\noindent
{\bf  Part 1.}
Let $K$ and  $B_{R_0}(x_0)$, 
are as in the Proposition \ref{higher-int}.
For  $B_R(y) \subset B_{2R}(y)\subset B_{R_0}(x_0)$,
let us define $p_i$ and $q_i$ as in the Proposition \ref{higher-int}.
We define a \it frozen functional \rm $\F_0$ as
\begin{align}
	F_0(x,z):= ~&|z|^{p_2}+a(x)^{\frac{q_2}{q(x)}}|z|^{q_2} \\
	\F_0(w,D)= ~& \int_{B_R(y)}F_0(x,Dw)dx \la{def_F_0}.
\end{align}
In what follows, let us abbreviate $\tilde{a}(x)=\left( a(x)\right)^{\frac{q_2}{q(x)}}$
as in the proof of Proposition \ref{higher-int}.

Let $v\in W^{p_2}(B_R(y))$ be a minimizer of $\F_0$ in the class
\[
	u+W^{p_2}_0(B_R(y)):=\{ w \in W^{p_2}(B_R(y))~;~
	w-u \in W^{p_2}_0(B_R(y))\}.
\]
Then, by \cite[Theorem1.3]{colmin15-1}, for any $\gamma \in (0,1)$ there exists
a constant $C>0$ dependent on
$
n, p_2, q_2, \lambda, \Lambda, [ {\tilde{a}} ]_{0, \beta}, \| \tilde{a}\|_\infty, \| Dv\|_{L^{p_2}(B_R(y))}
$ and $ \gamma
$
 such that
\be\la{morrey-0}
	\int_{B_\rho(y)} F_0(x,Dv) dx
	\leq C \left( \frac{\rho}{R}\right)^{n-\gamma} \int_{B_R(y)} F_0(x,Dv) dx
	\leq C \left( \frac{\rho}{R}\right)^{n-\gamma}\int_{B_R(y)} F_0(x,Du) dx,
\ee	
where we used the minimality of $v$.
Here, we mention that by the coercivity of the functional and the minimality of $v$
we have the following:
\be\la{2019-1}
	\| Dv\|^{p_2}_{L^{p_2}(B_R(y))} \leq \F_0(v,B_R(y)) \leq \F_0(u, B_R(y)).
\ee
On the other hand, since we are taking $R>0$ sufficiently small so that \pref{choice of R} holds,
there exists a constant $C(p_2,q_2)>0$ such that
\be\la{2019-2}
	F_0(x, \xi) \leq C(p_2, q_2) (1+F(x,\xi))^{1+\delta_0}
\ee
holds for any $(x,\xi)\in B_R(y) \times \R^{nN}$.
Now, by virtue of above 2 estimates and Proposition \ref{higher-int}, we can see,
for a constant $C>0$ depending only on the given data on the functional, that
\be\la{2019-3}
	\| Dv\|^{p_2}_{L^{p_2}(B_R(y))} \leq \F_0(v,B_R(y)) \leq C\left(1+\F(u, K)\right)^{1+\delta}.
\ee
Because of the local minimality of $u$, the last quantity is finite.
Consequently, we can regard the constant in \pref{morrey-0} is a constant
depending only on given data and $\F(u,K)$.

For further convenience, let us mention that from \pref{morrey-0}, is nothing to see that
\begin{align}
	\int_{B_\rho(y)} (1+F_0(x,Dv)) dx
	\leq ~ & C \left( \frac{\rho}{R}\right)^{n-\gamma} \int_{B_R(y)} (1+F_0(x,Dv)) dx\nn\\
	\leq ~ &  C \left( \frac{\rho}{R}\right)^{n-\gamma}\int_{B_R(y)} (1+F_0(x,Du)) dx.\la{morrey-1}
\end{align}

Let us compare $Du$ and $Dv$.
Mentioning the elementary equality for a twice differentiable function
\[
	f(1)-f(0)= f^\prime (0)+\int_0^1(1-t) f^{\prime\prime}(t) dt,
\]
as \cite[(9)]{cosmin99}, and using the fact that $v$ satisfies the Euler-Lagrange equation
of $\F_0$, we can see that
\begin{align}
	& \F_0(u)-\F_0(v)\nn\\
	=~&  \int_{B_R(y)} \frac{d}{dt} F_0(x,tDu-(1-t)Dv)\big|_{t=0} dx\nn\\
	& ~~~ ~~~ ~~~
	+ \int_{B_R(y)} dx \int_0^1 (1-t)\frac{d^2}{dt^2}F_0(x,tDu+(1-t)Dv)dt\nn\\
	= ~& \int_{B_R(y)} D_{z} F_0(x, Dv)(Du-Dv)\nn\\
	& + \int_{B_R(y)} dx \int_0^1 (1-t)D_zD_z F_0(x,tDu+(1-t)Dv)(Du-Dv)(Du-Dv)  dt \nn\\
	\geq ~ &  C\int_{B_R(y)} dx \int_0^1(1-t)\left[ |tDu+(1-t)Dv|^{p_2- 2 }
	\right.\nn\\
	& ~~ ~~ ~~ ~~ ~~ ~~  ~~ \left.
	+\tilde{a}(x) |tDu+(1-t)Dv|^{q_2- 2} \right] |Du-Dv|^2 dt\nn\\
	\geq ~ & C\int_{B_R(y)}\left(|Du|^{p_2-2}+ |Dv|^{p_2-2}\right)
	|Du-Dv|^2 dx\nn\\
	& ~~ ~~ ~~ ~~
	+ \int_{B_R(y)}\tilde{a}(x) \left(|Du|^{q_2-2}+ |Dv|^{q_2-2}\right)
	|Du-Dv|^2 dx.\la{est33}
\end{align}
On the other hand, by the minimality of $v$, we have
\be
	\F_0(u) - \F_0(v) \leq
	\F_0(u) -\F(u,B_R(y)) + \F(v,B_R(y))-\F_0(v).\la{est34}
\ee

 Since we are assuming $p(x), q(x) \in C^{0,\sigma}$,
using the inequality \cite[(7)]{cosmin99}, we can see that,
for any $\vep \in (0,1)$, there exists a positive constant $C$ such that
\begin{align}
	& \F_0(u) -\F(u,B_R(y))\nn\\
	\leq ~& \int_{B_R(y)} \left[\left( |Du|^{p_2} -|Du|^{p(x)}\right) +
	\left( \left(a(x)^{\frac{1}{q(x)}}|Du|\right)^{q_2}- \left(a(x)^{\frac{1}{q(x)}} |Du|\right)^{q(x)}\right)\right] dx \nn\\
	\leq ~& C(\vep)R^\sigma \int_{B_R(y)} \left( 1+|Du|^{(1+\vep)p_2}\right) dx\nn\\
	& ~~ ~~ ~~ ~~ + C(\vep) R^\sigma \int_{B_R(y)}
	\left( 1+\left(a(x)^{\frac{1}{q(x)}}|Du|\right)^{(1+\vep)q_2}\right) dx\nn\\
	\leq ~ &CR^{n+\sigma}+ C(\vep)R^\sigma \int_{B_R(y)} \left( 1+ |Du|^{p_2(1+\vep)}
	+\left( 1+\tilde{a}(x)|Du|^{q_2}\right)^{1+\vep}\right) dx\nn\\
	\leq ~& CR^{n+\sigma}+ C(\vep)R^\sigma
	\int_{B_R(y)} F_0(x,Du)^{1+\vep} dx\la{est35}
\end{align}
Similarly we have
\begin{align}
	& \F(v,B_R(y))-\F_0(v)\nn\\
		\leq & \int_{B_R(y)} \left[\left( |D v|^{p_2} -|Dv|^{p(x)}\right) +
	\left( \left(a(x)^{\frac{1}{q(x)}}|Dv|\right)^{q_2}- \left(a(x)^{\frac{1}{q(x)}} |Dv|\right)^{q(x)}\right)\right] dx \nn\\
	\leq & C(\vep)R^\sigma \int_{B_R(y)} \left( 1+|Dv|^{(1+\vep)p_2}\right) dx\nn\\
	& ~~ ~~ ~~ ~~ + C(\vep) R^\sigma \int_{B_R(y)}
	\left( 1+\left(a(x)^{\frac{1}{q(x)}}|Dv|\right)^{(1+\vep)q_2}\right) dx\nn\\
	\leq ~ &CR^{n+\sigma}+ C(\vep)R^\sigma \int_{B_R(y)} \left( 1+ |Dv|^{p_2(1+\vep)}
	+\left( 1+\tilde{a}(x)|Dv|^{q_2}\right)^{1+\vep}\right) dx\nn\\
	\leq ~& CR^{n+\sigma}+ C(\vep)R^\sigma
	\int_{B_R(y)} F_0(x,Dv)^{1+\vep} dx.\la{est36}
\end{align}

Now, for  $\delta_0$ of Proposition \ref{higher-int}, choose $\delta_3>0$ so that
\pref{2.28} of Corollary \ref{higher-global} holds,  and let us take
$\vep$ so that $\vep \in (0, \min\{\delta_0/2, \delta_3\}/2)$.
Since  we are choosing $R$ so that \pref{choice of R} holds, we have
\be\la{3.10}
	F_0(x,\cdot)^{1+\vep}\leq (1+ F_0(x,\cdot))^{1+\min\{\delta_0/2, \delta_3\}}\leq C(1+F(x, \cdot))^{1+\delta_0}.
\ee

By Proposition \ref{higher-int} and \pref{3.10}, we deduce from \pref{est35} that
\begin{align}
	& \F_0(u) -\F(u,B_R(y))\nn\\
	\leq~&
	CR^{n+\sigma}+ C(\vep)R^\sigma \int_{B_R(y)} \left(1+F(x,Du)\right)^{1+\delta_0} dx \nn\\
	\leq ~& CR^{n+\sigma}+ C R^\sigma \int_{B_R(y)} F(x,Du)^{1+\delta_0} dx \nn\\
	\leq ~& CR^{n+\sigma}+ C  R^{\sigma-n\vep}
	\left( \int_{B_{2R}(y)} F(x,Du)dx \right)^{1+  \delta_0} \nn\\
	\leq ~& CR^{n+\sigma} + C R^{\sigma-n\vep}
	\int_{B_{2R}(y)} F(x,Du)dx,
	\la{est38}
\end{align}
where we used the fact that
\[	
	\int_{B_{2R}(y)} F(x,Du)dx \leq \int_K F(x,Du)dx \leq M_0
\]
for some constant $M_0$. The existence of $M_0$  guaranteed by the local minimality of
$u$.

For \pref{est36} we use Proposition \ref{higher-global}, Proposition \ref{higher-int} and \pref{3.10}, to get
\begin{align}
	& \F(v,B_R(y))-\F_0(v)\nn\\
	\leq ~& CR^{n+\sigma}+ C(\vep)R^\sigma\int_{B_R(y)} F_0(x,Du)^{1+\vep} dx\nn\\
	\leq ~& CR^{n+\sigma} + C R^{\sigma-n\vep}
	\int_{B_{2R}(y)} F(x,Du)dx.\la{est39}
\end{align}
On the other hand, by the definition of $F_0$, we have
\[
	F(x,Du) \leq C\left(1+F_0(x,Du)\right).
\]
So we have, combining \pref{est33}, \pref{est34}, \pref{est38} and \pref{est39}, that
\begin{align}
	 &\int_{B_R(y)}\left(|Du|^{p_2-2}+ |Dv|^{p_2-2}\right)
	|Du-Dv|^2 dx\nn\\
	& ~~ ~~ ~~ ~~
	+ \int_{B_R(y)}\tilde{a}(x) \left(|Du|^{q_2-2}+ |Dv|^{q_2-2}\right)
	|Du-Dv|^2 dx\nn\\
	\leq ~& \F_0(u)-\F_0(v) \nn\\
	\leq ~&CR^{n+\sigma} + CR^{\sigma-n \vep} \int_{B_{2R}(y)} (1+F_0(x,Du)) dx.
	\la{est42}
\end{align}

By virtue of \pref{V_p} and \pref{morrey-1}, we can see that
\begin{align}
	& \int_{B_\rho(y)} (1+ F_0(x,Du)) dx \nn\\
	=~& \int_{B_\rho(y)} (1+ F_0(x,Dv) ) dx+\int_{B_\rho(y)}\left(F_0(x,Du)-F_0(x,Dv)\right)dx\nn\\
	\leq ~ & C\left(\frac{\rho}{R}\right)^{n-\gamma}
	\int_{B_(y)} ( 1+ F_0(x,Dv))  dx \nn\\
	& + \int_{B_\rho(y)} \left[ |V_{p_2} (Du)|^2 + \tilde{a}(x) |V_{q_2}(Du)|^2
	-\left(|V_{p_2} (Dv)|^2 +\tilde{a}(x)|V_{q_2}(Dv)|^2\right)\right] dx\nn\\
	\leq ~& C\left(\frac{\rho}{R}\right)^{n-\gamma}
	\int_{B_(y)} (1+ F_0(x,Dv))  dx \nn\\
	& + \int_{B_R(y)}\left[ \left( |V_{p_2}(Du) |^2 - |V_{p_2}(Dv)|^2\right)
	+ \tilde{a}(x) \left( |V_{q_2}(Du) |^2 - |V_{q_2}(Dv)|^2\right)\right] dx\nn\\
	\leq ~& C\left(\frac{\rho}{R}\right)^{n-\gamma}
	\int_{B_(y)} ( 1+ F_0(x,Dv)) dx \nn\\
	& + \int_{B_R(y)} |V_{p_2}(Du) - V_{p_2}(Dv)|^2dx
	+ \int_{B_R(y)}  \tilde{a}(x) |V_{q_2}(Du) -V_{q_2}(Dv)|^2 dx\nn\\
	\leq ~ & C\left(\frac{\rho}{R}\right)^{n-\gamma}
	\int_{B_(y)} ( 1+ F_0(x,Dv)) dx \nn\\
	& +\int_{B_R(y)}\left(|Du|^{p_2-2}+ |Dv|^{p_2-2}\right)
	|Du-Dv|^2 dx\nn\\
	& ~~ ~~ ~~ ~~
	+ \int_{B_R(y)}\tilde{a}(x) \left(|Du|^{q_2-2}+ |Dv|^{q_2-2}\right)
	|Du-Dv|^2 dx\nn\\
	\leq ~& C\left(\frac{\rho}{R}\right)^{n-\gamma}
	\int_{B_{R}(y)} (1 + F_0(x,Dv))  dx \nn\\
	& ~~~ + CR^{n+\sigma} +
	CR^{\sigma- n \vep} \int_{B_{2R}(y)} (1+F_0(x,Du)) dx\nn\\
	\leq ~& C\left[\left(\frac{\rho}{R}\right)^{n-\gamma} + R^{\sigma-n \vep}\right]
	\int_{B_{2R}(y)} (1+ F_0(x,Du))  dx+CR^{n+\sigma}.
\end{align}
Using well-known lemma (see for example \cite[Lemma 5.13]{giamar05}), for sufficiently small $R>0$,
we can see that for any 
 $ \gamma^\prime \in (\gamma,1)$  there exists a constant $C$ depending
given data and $\zeta$ such that
\be\la{est44}
	\int_{B_\rho(y)}F_0(x,Du)dx\leq C\left(\frac{\rho}{R}\right)^{n- \cbl\gamma^\prime}
	\int_{B_{2R}(y)} F_0(x,Du) dx+C\rho^{n-\gamma^\prime}
\ee
hold for any $\rho \in (0,R)$.
Now, since \pref{morrey-1} holds for any $\gamma \in (0,1)$, we can choose
$\gamma^\prime \in (0,1)$ arbitrarily in \pref{est44}.
On the other hand, since we are supposing that $p(x)\geq p_0>1$,
for any $\zeta \in (0,1)$, choosing $\gamma^\prime \in (0,1)$
so that $\gamma^\prime \leq  p_0(1-\zeta)$,  we see that there exists
a positive constant $C$ dependent on the given data,
$K \Subset \Omega$ and $\F(u,K)$ such that
\[
	\int_{B_\rho(y)} |Du|^{p_0} dx \leq C\rho^{n-p_0(1-\zeta)}
\]
holds for any $B_\rho(y)$ with $4\rho \leq \dist (K,\partial \Omega)$.
So, we conclude that
$u \in C^{0,\zeta}_{\loc}(\Omega)$
for any $\zeta \in (0,1)$ by virtue of Morrey's theorem.


\medskip
\noindent
{\bf Part 2.}
Now, we are going to show the H\"older continuity of the gradient $Du$.
For $y\in \stackrel {\circ }{K}$ let $R_1\in (0,R_0)$ be a constant such that $B_{R_1}(y) \subset K$,
and for $0<R<R_1/4$ let $v$ be as in {\it Part 1}.
\cb
Then, by the estimate given by Colombo-Mingione at \cite[p.484, l.-6]{colmin15-1},
we see that there exist constants $C>0$
\!\!, dependent on $n, p_2, q_2, \lambda, \Lambda,  \| \tilde{a}\|_\infty$,
$\dist(K,\partial \Omega)$, $\F_0(v,B_R(y))$
 and $\tilde{\alpha}\in (0,1)$
\be\la{colminp484}
	\intmean_{B_\rho(y)} |Dv-(Dv)_\rho|^{p_2}dx \leq C\rho^{\frac{\tilde{\alpha}\beta}{64n}},
\ee
holds for any  $\rho \leq R/2$.
Here, as in \bf Part 1\rm, let us mention that $\F_0(v,B_R(y))$ can be controlled by $\F(u,K)$
as \pref{2019-3}.
So, we can choose the above constant in \pref{colminp484}
to be dependent only on the given data of the functional,
the local minimizer $u$ under consideration  and $K$.

In what follows, let us abbreviate
\[
	\bar{\alpha}:=\frac{\tilde{\alpha}\beta}{64n}.
\]

By virtue of \pref{colminp484}, for $\rho$ and $R$ as above,
we get
\begin{align}
	& \int_{B_\rho(y)}|Du-(Du)_\rho|^{p_2} dx
	\leq C\int_{B_\rho(y)} |Du-(Dv)_\rho|^{p_2} dx \nn\\
	\leq ~& C\left( \int_{B_\rho(y)} \left|Dv-(Dv)_\rho\right|^{p_2} dx
	+ C\int_{B_\rho(y)} \left|Du-Dv\right|^{p_2} dx \right)\nn\\
	\leq ~ & C \rho^{n+\bar{\alpha}}
	+ C\int_{B_R (y)} \left|Du-Dv\right|^{p_2} dx.
	\la{est46}
\end{align}

For the case that $p_2 \geq 2$, since there exists a constant such that
\[
	|z_1-z_2|^{p_2}\leq C \left(|z_1|^{p_2-2} + |z_2|^{p_2-2} \right)|z_1-z_2|^2
\]
for any $z_1,  z_2\in \R^n$, using \pref{est42},
we can estimate the last term of the right hand side of \pref{est46}
as
\begin{align}
	& \int_{B_R(y)}|Du-Dv|^{p_2} dx\nn\\
	\leq ~& CR^{n+\sigma} +C R^{\sigma-n \vep}
	\int_{B_{2R}(y)} F_0(x, Du)dx.\la{est46-1}
\end{align}
We use \pref{est44} replacing $\rho$ by $2R$ and $R$ by $R_0$ to see that
\[
	\int_{B_{2R}(y)}F_0(x,Du)dx\leq C R^{n-\zeta} R_0^\zeta \intmean_{B_{R_0}}
	F_0(x,Du)dx + CR^{n-\zeta}.
\]
Since $R_0$ is determined in the beginning of the proof, we can regard
$R_0^\zeta ~ \sintmean_{B_{R_0}} F_0(x,Du)dx$ as a constant.
So, we get
\be\la{est46-3}
	\int_{B_{2R}(y)}F_0(x,Du)dx\leq CR^{n-\zeta}.
\ee
By \pref{est46-1} and \pref{est46-3}, we
obtain
\be\la{est1-p2geq2}
	\int_{B_R(y)}|Du-Dv|^{p_2} dx
	\leq CR^{n+\sigma} +C R^{n-\zeta+\sigma-n \vep} \leq CR^{n-\zeta+\sigma-n  \vep}.
\ee

When $1<p_2<2$, using H\"older's inequality, \pref{V_p}
and \pref{est42},
we can see that
\begin{align}
	& \int_{B_R(y)} |Du-Dv|^{p_2} dx\nn\\
	\leq ~& C \int_{B_R(y)} \left|V_{p_2}(Du)-V_{p_2}(Dv)\right|^{p_2}
	(|Du|+|Dv|)^{\frac{p_2(2-p_2)}{2}} dx\nn\\
	\leq ~& C \left(\int_{B_R(y)} \left|V_{p_2}(Du)-V_{p_2}(Dv)\right|^2 dx\right)^{\frac{p_2}{2}}
	\left(\int_{B_R(y)} (|Du|+|Dv|)^{\frac{p_2}{2}} dx\right)^{\frac{2-p_2}{2}}\nn\\
	\leq ~& \left( \int_{B_R(y)} (|Du|+|Dv|)^{p_2-2} |Du-Dv|^2 dx \right)^{p_2}
	\left(\int_{B_R(y)} F_0(x,Du) dx\right)^{\frac{2-p_2}{2}}\nn\\
	\leq ~& \left( CR^{n+\sigma} +C R^{\sigma -n \vep}
	\int_{B_{2R}(y)} F_0 (x,Du)dx \right)^{\frac{p_2}{2}}
	\left(\int_{B_{2R}(y)} F_0(x,Du) dx\right)^{\frac{2-p_2}{2}}\nn\\
	\leq ~& CR^{\frac{(n+\sigma)p_2}{2}}\left(\int_{B_{2R}(y)} F_0(x,Du) dx\right)^{\frac{2-p_2}{2}}
	\nn\\
	& ~~ ~~ ~~ + CR^{\frac{(\sigma -n\vep)p_2}{2}} \int_{B_{2R}(y)}F_0(x,Du)dx.
	\la{est46-2}
\end{align}
By \pref{est46-2} and \pref{est46-3},
we obtain
\begin{align}
	 & \int_{B_R(y)} |Du-Dv|^{p_2} dx\nn\\
	\leq ~& CR^{\frac{p_2(n+\sigma)}{2}} R^{\frac{(2-p_2)(n-\zeta)}{2}} +
	CR^{\frac{(\sigma -n  \vep )p_2}{2}} R^{n-\zeta}\nn\\
	= ~& CR^{n-\zeta+\frac{p_2(\sigma+\zeta)}{2}} +
	CR^{n-\zeta+\frac{p_2(\sigma-n  \vep )}{2}}\nn\\
	\leq ~&  2 CR^{n-\zeta+\frac{p_2(\sigma-n  \vep)}{2}}
	\leq 2 CR^{n-\zeta+\frac{(\sigma-n \vep)}{2}}.
	\la{est1-p2<2}
\end{align}
For the last inequality we used the following facts:
\[
	0<R \leq 1, ~~0<\sigma - n \vep, ~~ p_2>1.
\]

Mentioning the above facts again and comparing \pref{est1-p2geq2} and \pref{est1-p2<2},
we see that, for $p_2>2$, the estimate  \pref{est1-p2<2} holds.
Now, combining \pref{est46} and \pref{est1-p2<2}, we obtain
\begin{align*}
	& \int_{B_\rho(y)}|Du-(Du)_\rho|^{p_2} dx
	\leq  C\left( \rho^{n+\bar{\alpha}} + R^{n-\zeta+\frac{\sigma - n  \vep}{2}}\right).
\end{align*}
This holds for any $0< \rho < R/2 \leq R_0/8$.
For $k>1$, let us put $\rho = R^k/2$
(
bearing in mind 
that $R^k/2\leq R/2$  holds for $k>1$), then
\[
	\rho^{n+\bar{\alpha}} + R^{n-\zeta +\frac{\sigma - n \vep }{2}} =
	\rho^{n+\bar{\alpha}} + (2\rho)^{\frac{2n-2\zeta +\sigma - n  \vep}{2k}}.
\]
So, we have
\begin{align}
	& \int_{B_\rho(y)}|Du-(Du)_\rho|^{p_2} dx
	\leq \rho^{n+\bar{\alpha}} + (2\rho)^{\frac{2n-2\zeta +\sigma - n  \vep}{2k}}.
	\la{est54}
\end{align}
Since
\[
	\bar{\alpha} =\frac{\tilde{\alpha}}{64n} \beta = \frac{\tilde{\alpha}}{64n} \min\{\alpha, \sigma\}
	\leq \frac{\sigma}{64},
\]
we can take $\vep $ sufficiently small so that $\bar{\alpha}< (\sigma-n \vep)/2$ then,
for sufficiently small $\zeta$,
\[
	n-\zeta+\frac{\sigma-n\vep}{2} > n+\bar{\alpha}
\]
holds.
Now, for such a choice of $ \vep $ and $\zeta$, putting
\[
	k=\frac{2n-2\zeta+\sigma-n  \vep}{2(n+\bar{\alpha})}~(>1)
\]
in \pref{est54},
we get
\[	
	\int_{B_\rho(y)}|Du-(Du)_\rho|^{p_2} dx\leq C \rho^{n+\bar{\alpha}},
\]
and therefore we obtain the H\"older continuity of $Du$ by virtue of
the Campanato's theorem.
\qed

{\bf Acknowledgement} The authors are deeply grateful to Giuseppe Mingione for interesting them in the problem.
This paper was partly prepared while the authors visited in Pisa the \it Centro di Ricerca Matematica Ennio De Giorgi \rm
- \rm Scuola Normale Superiore \rm in September 2016. The hospitality of the center is greatly acknowledged.

\newcommand{\noop}[1]{}\def\ocirc#1{\ifmmode\setbox0=\hbox{$#1$}\dimen0=\ht0
  \advance\dimen0 by1pt\rlap{\hbox to\wd0{\hss\raise\dimen0
  \hbox{\hskip.2em$\scriptscriptstyle\circ$}\hss}}#1\else {\accent"17 #1}\fi}
  \def\cprime{$'$} \def\cprime{$'$}

\end{document}